\newtheorem{defin}{Definition}
\newtheorem{lemma}[defin]{Lemma}
\newtheorem{remark}[defin]{Remark}
\newtheorem{proposition}[defin]{Proposition}
\newcommand{\rk}{\text{rk}}
\newcommand{\F}{\mathbb{F}}
\newcommand{\gaussmset}[2]{\left[\begin{smallmatrix}{#1}\\{#2}\end{smallmatrix}\right]}
\newcommand{\gaussmnum}[3]{\gaussmset{#1}{#2}_{#3}}
\newcommand{\gaussmsetminusset}[3]{\gaussmset{{#1}{\setminus}{#2}}{#3}}
\newcommand{\gaussmsetminusnum}[4]{\gaussmsetminusset{#1}{#2}{#3}_{#4}}
\newcommand{\twolines}[2]{#1 #2}
\title{New LMRD bounds for constant dimension codes and improved constructions}
\author{Daniel~Heinlein\thanks{
The author is with the Department of Mathematics, Physics, and Computer Science, University of Bayreuth, Bayreuth, GERMANY. Email: firstname.lastname@uni-bayreuth.de\newline{}The work was supported by the ICT COST Action IC1104 and grants KU 2430/3-1, WA 1666/9-1 -- ``Integer Linear Programming Models for Subspace Codes and Finite Geometry'' -- from the German Research Foundation.}}
\begin{document}

\maketitle

\begin{abstract}
We generalize upper bounds for constant dimension codes containing a lifted maximum rank distance code first studied by Etzion and Silberstein.
The proof allows to construct several improved codes.

\smallskip
\noindent \textbf{Keywords:}
Finite projective spaces,
constant dimension codes,
subspace codes,
subspace distance,
rank distance,
maximum rank distance codes,
lifted maximum rank distance bound,
combinatorics.
\end{abstract}

\section{Introduction}\label{sec:introduction}

Let $V \cong \mathbb{F}_q^v$ be a $v$-dimensional vector space over the finite field $\mathbb{F}_q$ with $q$ elements.
By $\gaussmset{V}{k}$ we denote the set of all $k$-dimensional subspaces in $V$.
Its size is given by the $q$-binomial coefficient $\gaussmnum{v}{k}{q}=\prod_{i=0}^{k-1}\frac{q^v-q^i}{q^k-q^i}$ for $0 \le k \le v$ and $0$ otherwise.

The set of all subspaces of $V$ forms a metric space associated with the so-called subspace distance $d_S(U,W) = \dim(U+W) - \dim(U \cap W)$, cf. \cite[Lemma~1]{MR2451015}.
A $(v,M,d;k)_q$ constant dimension code (CDC) $C$ is a subset of $\gaussmset{V}{k}$ of cardinality $M$ in which for each pair of elements, called codewords, the subspace distance is lower bounded by $d$, i.e., we have $d \le d_S(U,W)$ for all $U \ne W \in C$.

The main question of subspace coding in the constant dimension case asks for the maximum cardinality $M$ for fixed parameters $q$, $v$, $d$, and $k$ of a $(v,M,d;k)_q$ code.
The maximum cardinality is denoted as $A_q(v,d;k)$.

$A_q(v,d;k)$ is known for some parameters.
By definition, $A_q(v,d;k)=0$ for $k<0$ or $v<k$.
If $d \le 2$, then $A_q(v,d;k)=\gaussmnum{v}{k}{q}$.
Let $U^\perp$ denote the orthogonal complement of $U$ with respect to a fixed non-degenerate symmetric bilinear form on $V$.
Since $d_S(U^\perp, W^\perp)=d_S(U,W)$, we have $A_q(v,d;k) = A_q(v,d;v-k)$, cf. \cite[Remark after Lemma~1]{MR2597176}, and hence may assume $k \le v/2$.
If $2k<d$, any code has at most one element.
The subspace distance in the constant dimension case is always even: $d_S(U,W)=2(k-\dim(U \cap W))$ for $U,W \in \gaussmset{V}{k}$.
Therefore we occasionally use the assumption $2 \le d/2 \le k \le v/2$.

Note that for $U\ne W$ in a $(v,\#C,d;k)_q$ CDC $C$ the subspace distance yields $\dim(U \cap W) \le k-d/2$.
Therefore any at least $(k-d/2+1)$-dimensional subspace of $V$ is contained in at most one codeword.

A prominent code construction uses maximum rank distance (MRD) codes.
A linear rank metric code $[m \times n, M, d]_q$ is a subspace $C$ of the vector space of $m \times n$ matrices over $\mathbb{F}_q$, i.e., $\mathbb{F}_q^{m \times n}$, of cardinality $M$, for which the distance of two elements is lower bounded via the rank metric $d_r(A,B) = \rk(A-B)$, i.e., $d \le d_r(A,B)$ for all $A \ne B \in C$. For all parameters, $0 \le m, n, d$ and $q$ prime power, there is a linear rank metric code that attains the maximum cardinality of $\left\lceil q^{\max\{m,n\}(\min\{m,n\}-d+1)} \right\rceil$, cf. \cite{MR791529}.

The lifted MRD (LMRD) code \cite[Proposition~4]{MR2450762} is a $(v,\#M,d;k)_q$ CDC $C$ that uses a $k \times k$ identity matrix $I_k$ as prefix for a $[k \times (v-k),\#M,d/2]_q$ MRD code $M$, where $2 \le d/2 \le k \le v/2$ implies $\#M=q^{(v-k)(k-d/2+1)}$: $C=\{ \operatorname{rowspan}(I_k \mid A) : A \in M \}$.
The horizontal concatenation of matrices, having the same number of rows, is denoted by ``$\mid$''.

The arising question of upper bounds on sizes for CDCs that contain an LMRD as subset was partly answered by Etzion and Silberstein in \cite[Theorem~10 and Theorem~11]{MR3015712}.
This paper generalizes both bounds in Proposition~\ref{prop:1} and Proposition~\ref{prop:2} such that both bounds together cover the parameter range $k < 3d/2$ together with $2 \le d/2 \le k \le v/2$.

Since the writing of \cite{MR3015712} there are quite a few works that can profit of a generalized LMRD bound.
First of all Etzion asked in Research Problem~5 of his survey of open problems~\cite{etzion2013problems} and the authors of \cite{heinlein2017coset} asked in the conclusion for a generalization of the LMRD bound.
Next the expurgation-augmentation method of Honold et al. \cite{liu2014new,ai2016expurgation} often surpasses the LMRD bound and is therefore stronger than all constructions that include an LMRD as subset.
The homepage \url{http://subspacecodes.uni-bayreuth.de} bundled with the manual in \cite{HKKW2016Tables} lists some explicit calculations of lower and upper bounds and particularly the LMRD bound for small parameters.
Finally, there are multiple papers that use the LMRD bound and can profit of this generalization \cite{MR3440233,MR3329980,silberstein2013new,MR3367813,MR3705116,heinlein2017new}.

The main result of this paper is summarized in this proposition.
\begin{proposition}\label{prop:0}
For $2 \le d/2 \le k \le v/2$ let $C$ be a $(v,\#C,d;k)_q$ CDC that contains an LMRD code.

If $k<d \le 2/3 \cdot v$ we have
\[\#C \le q^{(v-k)(k-d/2+1)} + A_q(v-k,2(d-k);d/2)\text{.}\]
If additionally $d=2k$, $r \equiv v \mod{k}$, $0 \le r <k$, and $\gaussmnum{r}{1}{q}<k$, then the right hand side is equal to $A_q(v,d;k)$ and achievable in all cases.

If $(v, d, k) \in \{ (6+3l,4+2l,3+l), (6l,4l,3l) \mid l \ge 1 \}$, then there is a CDC containing an LMRD with these parameters whose cardinality achieves the bound.

If $k<d$ and $v<3d/2$ we have
\[\#C \le q^{(v-k)(k-d/2+1)} + 1\]
and this cardinality is achieved.

If $d \le k < 3d/2$ we have
\begin{align*}
\#C
&
\le
q^{(v-k)(k-d/2+1)} + A_q(v-k,3d-2k;d)
\\
&
+
\gaussmnum{v-k}{d/2}{q}\gaussmnum{k}{d-1}{q}
q^{(k-d+1)(v-k-d/2)}
/
\gaussmnum{k-d/2}{d/2-1}{q}
\text{.}
\end{align*}
\end{proposition}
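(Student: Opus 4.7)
The plan is to analyze the additional codewords $\mathcal{W} := C \setminus \mathcal{L}$ (where $\mathcal{L}$ denotes the LMRD subset of size $q^{(v-k)(k-d/2+1)}$) by pinning down each $W \in \mathcal{W}$ via its incidence with the distinguished $(v-k)$-subspace $W_0 := \{0\}^k \times \mathbb{F}_q^{v-k}$, which satisfies $U_A \cap W_0 = 0$ for every LMRD codeword $U_A = \operatorname{rowspan}(I_k \mid A)$. For $W \in \mathcal{W}$ I set $t(W) := \dim(W \cap W_0)$. Via the projection $\pi \colon V \to V/W_0 \cong \mathbb{F}_q^k$, a direct computation gives, for each $A$ in the underlying MRD code $\mathcal{M}$,
\[
\dim(U_A \cap W) \;=\; (k - t(W)) - \mathrm{rank}(\phi_{A,W}),
\]
where $\phi_{A,W} \in \operatorname{Hom}(\pi(W),\, W_0/(W \cap W_0))$ is induced by $x \mapsto xA$ (affine in $A$). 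Hence the CDC condition is equivalent to $\mathrm{rank}(\phi_{A,W}) \ge d/2 - t(W)$ for all $A \in \mathcal{M}$. When $t(W) = 0$, $\phi_{A,W}$ reduces to $A - B$ for the unique $B$ with $W = \operatorname{rowspan}(I_k \mid B)$; requiring $\mathrm{rank}(A - B) \ge d/2$ for all $A \in \mathcal{M}$ would extend $\mathcal{M}$, contradicting its Singleton-type maximality. So $t(W) \ge 1$ throughout.

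For $k < d$, a refinement of the rank argument applied to the quotient restrictions of $\mathcal{M}$ rules out $1 \le t(W) < d/2$ as well, so every $W \in \mathcal{W}$ satisfies $t(W) \ge d/2$. Fix any $\hat X(W) \in \gaussmset{W \cap W_0}{d/2}$. Since $k < d$ yields $d/2 > k - d/2$, the map $W \mapsto \hat X(W)$ is injective on $\mathcal{W}$, and its images constitute a CDC of dimension $d/2$ in $W_0$ with subspace distance at least $2(d-k)$, because $\dim(\hat X(W_1) \cap \hat X(W_2)) \le \dim(W_1 \cap W_2) \le k - d/2$. Under $d \le 2v/3$ (which forces $v - k \ge d$, keeping these CDC parameters meaningful) this yields $|\mathcal{W}| \le A_q(v-k,\,2(d-k);\,d/2)$, the first bound. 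Under the complementary regime $v < 3d/2$, the dimension formula inside $W_0$ gives $\dim((W_1 \cap W_0) \cap (W_2 \cap W_0)) \ge 2 \cdot d/2 - (v-k) > k - d/2$ for any two distinct extras, so $|\mathcal{W}| \le 1$; equality is achieved by adding any $k$-subspace of $W_0$, producing the $+1$ bound.

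For the central case $d \le k < 3d/2$, I partition $\mathcal{W}$ by $t(W)$. Extras with $t(W) = d$ embed injectively into $\gaussmset{W_0}{d}$ via $W \mapsto W \cap W_0$ (because $d > k - d/2$), and form a CDC of dimension $d$ with distance $\ge 2(d - (k - d/2)) = 3d - 2k$, contributing at most $A_q(v-k,\,3d-2k;\,d)$ codewords. Extras with $t(W) \notin \{0, d\}$ are bounded by a double count: to each such $W$ I associate every triple $(X', Z', f)$ where $X' \in \gaussmset{W \cap W_0}{d/2}$, $Z' \in \gaussmset{\mathbb{F}_q^k}{d-1}$ is a $(d-1)$-flat compatible with $\pi(W)$, and $f \in \operatorname{Hom}(\mathbb{F}_q^k/Z',\, W_0/X')$ is the quotient of the linking map of $W$. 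Because $d - 1 > k - d/2$ in this regime, each $(d-1)$-flat lies in at most one codeword of $C$, so together with $X'$ and $f$ the triple uniquely recovers $W$; moreover each eligible $W$ contributes at least $\gaussmnum{k-d/2}{d/2-1}{q}$ such triples, counted via subchains $X'' \subset X' \subseteq W \cap W_0$ with $\dim X'' = d/2 - 1$. Summing over the $\gaussmnum{v-k}{d/2}{q}\gaussmnum{k}{d-1}{q}q^{(k-d+1)(v-k-d/2)}$ available triples produces the third term. The achievability statements for $d = 2k$ with $\gaussmnum{r}{1}{q} < k$ and for the two infinite families $(6+3l, 4+2l, 3+l)$, $(6l, 4l, 3l)$ follow from explicit LMRD extensions (partial spread and Echelon--Ferrers type constructions).

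The principal obstacle is the double count in the $d \le k < 3d/2$ case: verifying both that the triple assignment $W \mapsto \{(X',Z',f)\}$ is truly injective across all $t(W) \notin \{0, d\}$ simultaneously -- ruling out collisions between different intermediate $t$-types -- and that the multiplicity lower bound $\gaussmnum{k-d/2}{d/2-1}{q}$ holds uniformly. Both depend critically on the window $d \le k < 3d/2$: the inequality $k \ge d$ makes the chain $X'' \subset X' \subseteq W \cap W_0$ available inside $W$, while $k < 3d/2$ is exactly what makes the $(d-1)$-flats $Z'$ non-repeatable across codewords.
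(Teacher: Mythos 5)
Your treatment of the two cases with $k<d$ is essentially the paper's: your $W_0$ is the special flat $\Gamma$, and mapping each extra codeword $W$ to a $d/2$-dimensional subspace of $W\cap\Gamma$ and reading off the induced minimum distance $2(d-k)$ (resp.\ concluding $\#\mathcal{W}\le 1$ when $v<3d/2$) is exactly Proposition~\ref{prop:1} plus the orthogonal-code observation. However, the case $d\le k<3d/2$ --- the actual content of Proposition~\ref{prop:2} --- has genuine gaps. First, your proof that every extra codeword meets $\Gamma$ in dimension at least $d/2$ is only asserted for $k<d$ (``a refinement of the rank argument''), yet it is indispensable precisely when $d\le k$: if some $W$ had $t(W)<d/2$, then $\gaussmset{W\cap W_0}{d/2}=\emptyset$, such a $W$ contributes zero triples, and your double count bounds nothing. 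The paper establishes $t(W)\ge d/2$ for all $2\le d/2\le k\le v/2$ by a covering argument (every $(k-d/2+1)$-subspace avoiding $\Gamma$ already lies in an LMRD codeword, Lemmas~\ref{lem:etzion_subspace}--\ref{lem:cdc_partition}); your Singleton-maximality argument only excludes $t(W)=0$.

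Second, the double count itself does not go through as stated. Injectivity of $W\mapsto(X',Z',f)$ is argued from ``each $(d-1)$-flat lies in at most one codeword,'' but your $Z'$ lives in the quotient $V/W_0\cong\F_q^k$, and subspaces of the quotient are not subspaces of $V$, so the minimum distance of $C$ says nothing about them. The object that is contained in at most one codeword is the $(k-d/2+1)$-dimensional subspace $\langle Y,R\rangle\le V$ spanned by a $Y\in\gaussmset{U\cap\Gamma}{d/2}$ and an $R\in\gaussmsetminusset{U}{\Gamma}{k-d+1}$; this is what the paper counts (Lemma~\ref{lem:NtY}), and it is why the denominator is $\gaussmsetminusnum{k}{d/2}{k-d+1}{q}=q^{(d/2)(k-d+1)}\gaussmnum{k-d/2}{d/2-1}{q}$ --- the number of such $R$ inside a codeword with $t=d/2$ --- not a count of chains $X''\subset X'$, which would give $\gaussmnum{t}{d/2}{q}\gaussmnum{d/2}{d/2-1}{q}$ and depends on $t$. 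Third, the uniform multiplicity lower bound over all intermediate $t$ that you flag as ``the principal obstacle'' is exactly the nontrivial $q$-binomial monotonicity of Lemma~\ref{lem:something_le_zero}, which you do not prove; and by splitting off only $t=d$ (rather than all $t\ge d$, which the paper handles by shortening to $\mathcal{H}_d(U\cap\Gamma)$ via Lemma~\ref{lem:subspaces_upper_bound}) you additionally force the unverified double count to cover $t>d$. As written, the third bound of the proposition is not established.
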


For fixed $q$ and $v$,
Figure~\ref{fig} visualizes the parameter regions of $d$ and $k$ in which which if clause of Proposition~\ref{prop:0} is applicable.
The style is based on the tables in \url{http://subspacecodes.uni-bayreuth.de} \cite{HKKW2016Tables}.

\begin{figure}
\centering
\begin{tikzpicture}[xscale=0.3,yscale=0.3]
\node[above] at (2,-2) {$k=d=2$};
\draw[->] (2,-2) -- (10,-2) node[right] {$k$};
\draw[->] (2,-2) -- (2,-20) node[below] {$d$};
\draw (2,-4) -- (10,-20) node[right] {$d=2k$};
\draw (2,-2) -- (10,-10) node[right] {$d=k$};
\draw (3,-2) -- (10,-6-2/3) node[right] {$k=3d/2$};
\draw (6+2/3,-13-1/3) -- (10,-13-1/3) node[right] {$v=3d/2$};
\fill[pattern=vertical lines] (10,-2) -- (3,-2) -- (10,-6-2/3);
\fill[pattern=horizontal lines] (10,-10-0.4) -- (2,-2-0.4) -- (2,-4) -- (6+2/3,-13-1/3) -- (10,-13-1/3);
\fill[pattern=north east lines] (10,-6-2/3-0.4) -- (3,-2-0.4) -- (2,-2) -- (10,-10);
\fill[pattern=dots] (10,-13-1/3) -- (6+2/3,-13-1/3) -- (10,-20);

\end{tikzpicture}
\caption{
In analogy to the tables in \url{http://subspacecodes.uni-bayreuth.de}, see also \cite{HKKW2016Tables},
for fixed $q$ and $v$ the image 
shows the general knowledge about LMRD bounds.
From top to bottom: For parameters in the area with vertical lines no LMRD bound is known, then Proposition~\ref{prop:2} is the best LMRD bound, then Proposition~\ref{prop:1} is the best LMRD bound, and in the dotted area the LMRD bound is trivial.
}
\label{fig}
\end{figure}
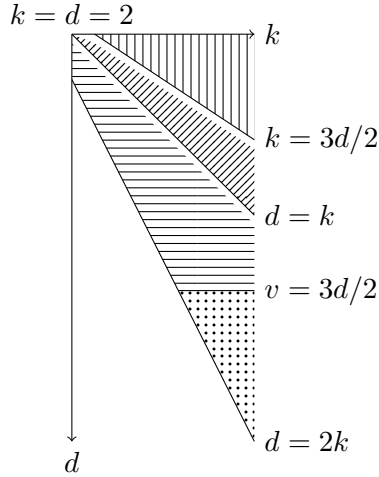

The paper is organized as follows.
We collect basic facts and definitions about constant dimension codes in Section~\ref{sec:introduction} and Section~\ref{sec:preliminaries}.
The two main bounds are proved in Section~\ref{sec:maintheorems}.
Since the second bound depends on two parameters we show how to choose these parameters to get the strongest bound in Section~\ref{sec:comparisonbounds}.
In this section, we also compare the second with the first bound.
The proof of Proposition~\ref{prop:0} is then presented, together with final remarks, in Section~\ref{sec:proof_prop0}.
Section~\ref{sec:code_improvements} constructs an addendum for an LMRD along the proof of Proposition~\ref{prop:1}, effectively increasing some lower bounds.
A conclusion is drawn in Section~\ref{sec:conclusion}.

\section{Preliminaries}\label{sec:preliminaries}

In the remainder of the paper, we need well-known facts about $q$-binomial coefficients.

Using $[x]_q=\gaussmnum{x}{1}{q}=(q^x-1)/(q-1)$ for integral $x \ge 0$ and $q$-factorials $[x]_q!=\prod_{i=1}^{x} [i]_q$, the $q$-binomial coefficient is $\gaussmnum{v}{k}{q}=\frac{[v]_q!}{[k]_q! [v-k]_q!}$ for $0 \le k \le v$ and $0$ otherwise.
We will use the inequalities $\frac{a-1}{b-1} \le (\ge) \frac{a}{b}$ if $1 < b$ and $a \le (\ge) b$ and $\frac{[x]_q}{[y]_q} \le (\ge) q^{x-y}$ if $0 < y$ and $x \le (\ge) y$.

Let $\mu(q)=\prod_{i=1}^{\infty}(1-q^{-i})^{-1}$, then $q^{k(v-k)} \le \gaussmnum{v}{k}{q} \le \mu(q) q^{k(v-k)}$ \cite[Lemma~4]{MR2451015}.
Note that $\mu(q)$ is monotonically decreasing in $q$ and some approximated values are given in Table~\ref{tab:mu}.

\begin{table}
\centering
\caption{Values for $\mu(q)$ and $\log_q(\mu(q))$ for small $q$.}\label{tab:mu}
\begin{tabular}{l|lllllll}
$q$              & 2    & 3    & 4    & 5    & 7    & 8    & 9    \\
\hline
$\mu(q)$         & 3.46 & 1.79 & 1.45 & 1.32 & 1.20 & 1.16 & 1.14 \\
$\log_q(\mu(q))$ & 1.79 & 0.53 & 0.27 & 0.17 & 0.09 & 0.07 & 0.06 \\
\end{tabular}
\end{table}

Moreover, one of the two Pascal identities for $q$-binomial coefficients is
$\gaussmnum{v}{k}{q}=\gaussmnum{v-1}{k}{q}q^k+\gaussmnum{v-1}{k-1}{q}$.

An upper bound for the size of CDCs is the Singleton bound:
\begin{lemma}[{\cite[Theorem~9]{MR2451015}}]\label{lem:singleton_bound}
For $q \ge 2$ prime power, $v$, $d/2$, $k$ integers with $d/2 \le \min\{k,v-k\}$:
\[ A_q(v,d;k)\!\le\!\gaussmnum{v-d/2+1}{\max\{k,v-k\}}{q}\!\!\!\!=\!\min\left\{\!\gaussmnum{v-d/2+1}{k}{q}\!\!\!\!, \gaussmnum{v-d/2+1}{v-k}{q}\!\right\}\!\text{.}\]
\end{lemma}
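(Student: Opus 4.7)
My plan is the standard Koetter--Kschischang intersect-and-inject proof, which exploits the observation recorded in Section~\ref{sec:introduction} that in any $(v,\#C,d;k)_q$ CDC every subspace of $V$ of dimension at least $k-d/2+1$ lies in at most one codeword. First I would invoke the duality $A_q(v,d;k)=A_q(v,d;v-k)$ already recalled in the introduction to reduce to the case $k\le v-k$, in which $\max\{k,v-k\}=v-k$, so it suffices to prove $A_q(v,d;k)\le\gaussmnum{v-d/2+1}{v-k}{q}$.

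Next I would fix any $W\le V$ with $\dim W=v-d/2+1$ and, given a CDC $C$ with the prescribed parameters, define a map $\Phi\colon C\to\gaussmset{W}{k-d/2+1}$ by selecting, for each codeword $U$, any $(k-d/2+1)$-dimensional subspace $X_U$ of $U\cap W$; such a choice exists because the dimension formula forces $\dim(U\cap W)\ge k+(v-d/2+1)-v=k-d/2+1\ge 1$ under the hypothesis $d/2\le k$. The map $\Phi$ is injective: if $\Phi(U_1)=\Phi(U_2)=X$ for codewords $U_1,U_2$, then $X$ is a $(k-d/2+1)$-dimensional subspace contained in both $U_1$ and $U_2$, and the observation recalled above forces $U_1=U_2$. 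Hence $\#C\le\gaussmnum{v-d/2+1}{k-d/2+1}{q}$, which by $q$-binomial symmetry equals $\gaussmnum{v-d/2+1}{v-k}{q}=\gaussmnum{v-d/2+1}{\max\{k,v-k\}}{q}$, as required.

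Finally, the claimed identification with $\min\bigl\{\gaussmnum{v-d/2+1}{k}{q},\gaussmnum{v-d/2+1}{v-k}{q}\bigr\}$ follows from the unimodality of $\gaussmnum{n}{j}{q}$ in $j$ with $n=v-d/2+1$: a short computation under the hypothesis $d/2\le\min\{k,v-k\}$ shows that the index $\max\{k,v-k\}$ lies at least as far from the center $n/2$ as the index $\min\{k,v-k\}$ does, so the $q$-binomial with the larger index is the smaller of the two. I do not foresee a serious obstacle; the only points requiring care are verifying that all intermediate $q$-binomials are nonzero (which holds because $d/2\le k\le v-k\le v-d/2+1$) and carrying out the short unimodality comparison at the end.
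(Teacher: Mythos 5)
Your argument is correct, and it is worth noting that the paper itself offers no proof of this lemma: it is imported verbatim as Theorem~9 of K\"otter--Kschischang \cite{MR2451015}, where the original derivation proceeds by iterated puncturing (each puncture reduces $v$ and $k$ by one and the distance by two, and after $d/2-1$ steps the punctured codewords must be pairwise distinct). Your version collapses that induction into a single step: intersecting every codeword $U$ with one fixed $W$ of dimension $v-d/2+1$, extracting a $(k-d/2+1)$-dimensional subspace $X_U\le U\cap W$, and invoking the observation from Section~\ref{sec:introduction} that a $(k-d/2+1)$-dimensional subspace lies in at most one codeword. The dimension count $\dim(U\cap W)\ge k-d/2+1$, the injectivity of $U\mapsto X_U$, the resulting bound $\#C\le\gaussmnum{v-d/2+1}{k-d/2+1}{q}=\gaussmnum{v-d/2+1}{v-k}{q}$, and the duality reduction to $k\le v-k$ are all sound, so this is a legitimate self-contained proof of the cited result. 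One small point of care in the final unimodality step: with $n=v-d/2+1$ and $k\le v-k$, the comparison of distances from the center reduces to $\bigl(v-k-\tfrac{n}{2}\bigr)^2-\bigl(k-\tfrac{n}{2}\bigr)^2=(v-2k)(d/2-1)\ge 0$, so what is actually needed there is $d\ge 2$ rather than the hypothesis $d/2\le\min\{k,v-k\}$ you invoke; this is harmless in context, since subspace distances in the Grassmannian are even and positive, but the justification should cite the correct inequality.
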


Successive zeros and ones are abbreviated:

$1_l=\underbrace{1 \ldots 1}_{l}$ and $0_l=\underbrace{0 \ldots 0}_{l}$.

The bijection $\tau$ between a Grassmannian and an appropriate set of full-rank matrices in reduced row echelon form (RREF)
\[
\tau_{q,v,k}:\gaussmset{\mathbb{F}_q^v}{k} \rightarrow \{A \in \mathbb{F}_q^{k \times v} \mid \rk(A)=k, \text{$A$ is in RREF}\}
\]
will also be applied multiple times. If $q$, $v$, and $k$ is clear from the context, we will abbreviate $\tau_{q,k,v}$ with $\tau$.

By 
\[\Gamma_{q,k,v} = \tau^{-1}(0_{(v-k) \times k} \mid I_{v-k})\]
we denote the $(v-k)$-dimensional subspace of $V$ that contains all vectors which start with $k$ zeros.
We use this to partition the vector space
\[V = \  \Gamma_{q,k,v} \ \dot\cup \  \Delta_{q,k,v}\text{,}\]
hence $\Delta_{q,k,v}$ contains all $q^v-q^{v-k}$ vectors of $V$ whose first $k$ entries are not $0_k$ each.

Note that the authors of \cite{MR3329980} denote $\Gamma_{q,k,v}$ special flat and that we again drop the reference to $q$, $v$, and $k$ if it is clear from the context.

A $k$-spread $S$ in $V$ is a subset of $\gaussmset{V}{k}$ such that all nonzero vectors of $V$ are partitioned in subspaces in $S$, hence $S$ is a $(v,(q^v-1)/(q^k-1),2k;k)_q$ CDC.
It exists iff $k \mid v$ \cite{MR0169117}.

A partial $k$-spread $P$ in $V$ is a subset of $\gaussmset{V}{k}$ such that some nonzero vectors of $V$ are packed in subspaces in $S$, hence it is a $(v,\#P,2k;k)_q$ CDC.
The question of the maximum cardinality $\#P$ is not setteled, cf. \cite{MR0404010,MR560442,MR2576869,MR3631662,MR3682737,MR3682916,MR0169117}.
Quite recently, it could be answered for many parameters.
\begin{lemma}[{\cite[Theorem~5]{MR3682737}}]\label{lem_size_ps}
For $r \equiv v \mod{k}$, $0 \le r <k \le v/2$, and $[r]_q<k$: $A_q(v,2k;k)=(q^v-q^{k+r})/(q^k-1)+1$.
\end{lemma}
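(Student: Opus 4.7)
The statement splits into an existence (lower bound) half and a non-existence (upper bound) half; the hypothesis $[r]_q<k$ is essential only for the latter.

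Lower bound via construction. Write $v=kt+r$ with $t\ge 1$ and $0\le r<k$. If $r=0$ the claim reduces to the classical existence of $k$-spreads when $k\mid v$. For $r>0$ I would use a Beutelspacher-style construction: decompose $V=U\oplus W$ with $\dim U=k+r$ and $\dim W=k(t-1)$, fix one $k$-subspace $B\subset U$, take a $k$-spread of $W$, and then adjoin further $k$-subspaces that meet $U$ only inside $B$ and meet $W$ in carefully chosen subspaces of a suitable spread-like decomposition, using the $\F_{q^k}$-structure on $W$ to guarantee pairwise trivial intersections. A direct count shows the resulting partial spread has size $(q^v-q^{k+r})/(q^k-1)+1$ and leaves exactly the $q^{k+r}-q^k$ nonzero vectors of $U\setminus B$ uncovered; this already matches the bound, so tightness is visible from the construction itself.

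Upper bound. Let $P$ be a partial $k$-spread of $V=\F_q^v$ with $\#P=\sigma$, and write the hole set (nonzero vectors not covered) as $N$, so that $|N|=q^v-1-\sigma(q^k-1)$. The target inequality $\sigma\le (q^v-q^{k+r})/(q^k-1)+1$ is equivalent to $|N|\ge q^{k+r}-q^k$. I would follow the Năstase–Sissokho strategy in two steps. First, a counting identity: double-count pairs $(B,H)$ with $B\in P$ and $H$ a hyperplane of $V$ containing $B$, together with pairs consisting of a hole and a hyperplane through it. Each $B\in P$ either lies in $H$ (contributing $q^k-1$ nonzero vectors) or meets $H$ in a $(k-1)$-dimensional subspace (contributing $q^{k-1}-1$ vectors), and summing over all hyperplanes yields an averaging inequality forcing some hyperplane $H^\ast$ to contain at least a prescribed number of holes and induce a partial $k$-spread of prescribed size. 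Second, iterate this descent through a flag of hyperplanes until the ambient space has dimension $k+r$; there the residual partial $k$-spread has at most one element, and pulling this back up the flag delivers the required lower bound on $|N|$.

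The main obstacle is the execution of the descent: one must track how $\sigma$ and the hole count simultaneously evolve under restriction to each hyperplane, and control the error introduced by codewords not contained in the chosen hyperplane. The condition $[r]_q<k$, equivalently $q^r-1<k(q-1)$, enters in exactly the final step as the inequality that prevents a second trivially intersecting $k$-subspace from being packed into the residual $(k+r)$-dimensional space alongside the forced holes; this is what pins down the "+1" rather than a larger additive term in the formula (the companion result of Năstase–Sissokho for $[r]_q\ge k$ gives a strictly larger constant). Ensuring the averaging inequality is sharp precisely at this point is the delicate bookkeeping step and the main technical content.
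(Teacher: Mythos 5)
The paper does not prove this lemma at all: it is imported verbatim as Theorem~5 of N\u{a}stase--Sissokho \cite{MR3682737}, so there is no internal proof to compare your attempt against. Judged on its own, your existence half is fine: the Beutelspacher-type construction \cite{MR0404010} does yield a partial $k$-spread of size $(q^v-q^{k+r})/(q^k-1)+1$ leaving exactly $q^{k+r}-q^k$ nonzero vectors uncovered, and this needs no hypothesis on $[r]_q$.

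The upper bound, however, contains a genuine gap. Reducing the claim to $\#N\ge q^{k+r}-q^k$ for the hole set $N$ is correct, but the hyperplane double count you then invoke only controls the \emph{average} number of holes per hyperplane (roughly $\#N/q$), and iterating such an averaging descent reproduces at best the Drake--Freeman-type bounds \cite{MR560442}, which carry a square-root correction term rather than the exact value; this is precisely why the problem stayed open for four decades after the case $r=1$. The step you defer as ``delicate bookkeeping''---making the averaging sharp and locating where $[r]_q<k$ enters---is not bookkeeping but the entire theorem. The known proofs supply a structural ingredient absent from your sketch: N\u{a}stase and Sissokho derive the bound from the supertail theorem for vector space partitions (if the holes, adjoined as $1$-dimensional blocks, are too few in number, their union is forced to be a subspace, which yields a contradiction), while the alternative route---alluded to in Section~\ref{sec:preliminaries} in connection with \cite{MR3631662,heinlein2017projective,honold2016partial}---uses that the hole set is $q^{k-1}$-divisible, i.e.\ $\#(H\cap N)\equiv \#N \pmod{q^{k-1}}$ for every hyperplane $H$, and extracts the lower bound on $\#N$ from the resulting MacWilliams-type identities. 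One of these ingredients must be stated and proved (or cited) for your argument to close; as written, the role of $[r]_q<k$ is asserted rather than derived.
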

With the exception of $21$ sporadic cases in \cite{MR3631662}, \cite[\S VI]{MR0169117} and \cite[Theorem~2.9 and~2.10]{MR3631662} describe the strongest upper bounds for partial spreads, the latter can be derived by interpreting the set of non-covered $1$-dimensional subspaces as columns of a generator matrix of a linear code, cf. \cite{heinlein2017projective,honold2016partial}.

A lower bound for CDCs, which in particular meets the upper bound in Lemma~\ref{lem_size_ps}, is given by the Echelon-Ferrers construction~\cite{MR2589964}. Its main ingredient is the following lemma which connects the subspace distance to the Hamming distance using the pivot vector $p_{q,v,k}:\gaussmset{V}{k} \rightarrow \F_2^v$ such that $p_{q,v,k}(U)_i=1$ iff the $i$-th columnn of $\tau(U)$ is a pivot column for $U \le V$. If the context implies $q$, $v$, and $k$, we abbreviate $p_{q,v,k}$ with $p$.
\begin{lemma}[{\cite[Lemma~2]{MR2589964}}]\label{lem:dh_ds}
If $U,W \le V$ then $d_S(U,W) \ge d_H(p(U),p(W))$.
\end{lemma}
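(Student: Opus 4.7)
The plan is to interpret both sides of the inequality in terms of the pivot column sets and reduce the claim to one inclusion. Write $P_U \subseteq \{1,\dots,v\}$ for the set of pivot column indices of $\tau(U)$, so that $p(U) \in \F_2^v$ is the characteristic vector of $P_U$ and $|P_U| = \dim U$; define $P_W$ analogously. The Hamming distance equals the size of the symmetric difference,
\[ d_H(p(U),p(W)) = |P_U \triangle P_W| = 2|P_U \cup P_W| - \dim U - \dim W, \]
while the dimension formula for subspaces gives
\[ d_S(U,W) = 2\dim(U+W) - \dim U - \dim W. \]
Subtracting the two expressions, the lemma collapses to the single inequality $\dim(U+W) \ge |P_U \cup P_W|$.

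To prove that inequality, I would stack $\tau(U)$ on top of $\tau(W)$ to form a matrix $M$ whose row space is $U+W$, so that $\rk(M) = \dim(U+W)$ and the pivot set of the RREF of $M$ has size $\dim(U+W)$. The key observation is that an index $j$ is a pivot column of $M$ iff column $j$ of $M$ is not a linear combination of columns $1,\dots,j-1$ of $M$, a criterion that is intrinsic to the column ordering and independent of any row operations. If $j \in P_U$, then any linear relation among the stacked columns $1,\dots,j$ of $M$ restricts, by projection onto the top block, to the same relation among columns $1,\dots,j$ of $\tau(U)$, contradicting that $j$ is a pivot of $\tau(U)$. Hence $j$ is a pivot of $M$, and the symmetric argument on the bottom block covers $j \in P_W$. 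This yields $P_U \cup P_W$ as a subset of the pivot set of $M$, so $|P_U \cup P_W| \le \rk(M) = \dim(U+W)$, which is what we needed.

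I do not expect any real obstacle: once the leading-column characterisation of pivots is in place, the projection-to-top-block step is immediate, and everything else is routine bookkeeping with set sizes and the dimension formula. The only point to be careful about is avoiding any confusion between the pivot structure of $\tau(U)$, $\tau(W)$, and the stacked matrix $M$, which is why the intrinsic characterisation of pivot columns is emphasised.
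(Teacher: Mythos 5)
Your argument is correct and complete. Note, however, that the paper does not prove this lemma at all: it is imported verbatim as Lemma~2 of the cited reference \cite{MR2589964}, so there is no in-paper proof to compare against. Your reduction is sound: from $d_H(p(U),p(W))=2|P_U\cup P_W|-\dim U-\dim W$ and $d_S(U,W)=2\dim(U+W)-\dim U-\dim W$ the claim is exactly $\dim(U+W)\ge |P_U\cup P_W|$, and the stacking argument establishes this, since the characterisation of a pivot column of $M$ as a column not lying in the span of the columns to its left is invariant under row operations, and any linear relation among the first $j$ columns of the stacked matrix restricts to the same relation in either block. This is the complementary formulation of the standard proof, which instead shows $\dim(U\cap W)\le |P_U\cap P_W|$ by arguing that every pivot position of $\tau(U\cap W)$ is a common pivot position of $\tau(U)$ and $\tau(W)$; the two versions are equivalent via inclusion--exclusion together with the two displayed identities. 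Your union version has the small advantage that the required containment $P_U\cup P_W\subseteq\{\text{pivots of }M\}$ is witnessed by an explicit matrix whose rank is visibly $\dim(U+W)$, avoiding any discussion of how $\tau(U\cap W)$ relates to $\tau(U)$ and $\tau(W)$.
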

For each codeword $c$ of a binary constant weight code of length $v$, weight $k$, i.e., each non-zero codeword has exactly $k$ ones, and Hamming distance $d$, the Echelon-Ferrers construction builds a CDC $C_c$ using codewords $M$ of a $[k \times (v-k),N,d/2]_q$ rank metric code with prescribed zeros such that $M$ \emph{fits} in a RREF matrix with pivots in the positions of the ones of $c$.
The final CDC is the the union of each $C_c$.

Although it is an open question how this rank metric code may be constructed in the general case, for the scope of this paper we only need:
\begin{lemma}[{cf. \cite[Theorem~9]{MR3480069}}]\label{lem:EFMRD_special_case}
Let $A$ be a $[a \times a',l,d_a]_q$ and $B$ a $[b \times b',l,d_b]_q$ rank metric code.
Then there is a $[(a+b) \times (a'+b'),l,d_a+d_b]_q$ rank metric code such that each codeword contains a zero matrix of size $b \times a'$ in the bottom left corner.
\end{lemma}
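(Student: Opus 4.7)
The plan is to realize the combined code by block-diagonal juxtaposition. Since $A$ and $B$ have the same cardinality $l$, I fix an arbitrary bijection $\phi\colon A \to B$. For each $M \in A$ I define
\[
C_M = \begin{pmatrix} M & 0_{a \times b'} \\ 0_{b \times a'} & \phi(M) \end{pmatrix} \in \F_q^{(a+b)\times(a'+b')},
\]
and set $C = \{C_M : M \in A\}$. By construction every codeword contains the required $b \times a'$ zero block in its bottom-left corner, and the map $M \mapsto C_M$ is injective, so $|C| = l$.

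To verify the minimum rank distance I pick $M \ne N$ in $A$ and examine
\[
C_M - C_N = \begin{pmatrix} M - N & 0 \\ 0 & \phi(M) - \phi(N) \end{pmatrix}.
\]
Bijectivity of $\phi$ forces $\phi(M) \ne \phi(N)$ in $B$, so the rank-distance hypotheses of the two input codes yield $\rk(M - N) \ge d_a$ and $\rk(\phi(M) - \phi(N)) \ge d_b$. Combining these with the elementary identity that the rank of a block-diagonal matrix equals the sum of the ranks of its diagonal blocks gives $\rk(C_M - C_N) \ge d_a + d_b$, as required.

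The one auxiliary statement worth recording is the block-diagonal rank identity itself, which is routine: a maximal linearly independent subset of rows of $M - N$, padded with zeros in the last $b'$ columns, together with a maximal linearly independent subset of rows of $\phi(M) - \phi(N)$, padded with zeros in the first $a'$ columns, forms a linearly independent row set of $C_M - C_N$, because the two column blocks separate the two contributions. I foresee no genuine obstacle in this proof; almost all of the content is already encoded in the block-matrix picture, and freedom in the choice of $\phi$ (and of the top-right block, which could be taken arbitrarily without affecting the argument) is not needed beyond existence.
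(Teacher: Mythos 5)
Your block-diagonal construction is correct and is essentially the argument behind the cited result \cite[Theorem~9]{MR3480069}; the paper states this lemma without proof, and the rank additivity for block-diagonal (indeed block-triangular) matrices together with the bijectivity of $\phi$ gives exactly the claimed parameters. The only refinement worth adding is that the notation $[m \times n, M, d]_q$ is defined in this paper for \emph{linear} codes, so you should choose $\phi$ to be an $\F_q$-linear isomorphism (which exists because $A$ and $B$ are $\F_q$-subspaces of the same cardinality $l$, hence of the same dimension $\log_q l$); then $C$ is the graph of $\phi$ in block form and therefore a subspace, while your distance argument goes through unchanged.
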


$\mathcal{S}_n$ is the symmetric group of permutations of $n$ elements.

$\mathcal{H}_k(U)$ is an arbitrary $k$-dimensional subspace of a vector space $U$, cf. \cite[before Definition~1]{MR2451015}.

Moreover, we need to count the number of subspaces which lie in a given subspace and only intersect another given subspace trivially.

\begin{defin}
Let $W$ and $U$ be subspaces of $V$. The set of all $c$-dimensional subspaces that are in $W$ and intersect $U$ trivially is:
\[\gaussmsetminusset{W}{U}{c} = \{ A \le W \mid \dim(A)=c \text{ and } A\cap U = \{0\} \}\text{.}\]
For $w = \dim(W)$ and $u = \dim(U \cap W)$ its cardinality is $\gaussmsetminusnum{w}{u}{c}{q}$ which can be computed:
\[\gaussmsetminusnum{w}{u}{c}{q} = \prod_{i=0}^{c-1} \frac{q^w-q^{u+i}}{q^c-q^i} = q^{uc} \prod_{i=0}^{c-1} \frac{q^{w-u}-q^{i}}{q^c-q^i} = q^{uc} \gaussmnum{w-u}{c}{q}\]
for $0 \le c \le w-u$ and 0 otherwise.
\end{defin}

\section{Bounds on CDCs containing LMRDs}\label{sec:maintheorems}

In general, any $(k-d/2+1)$-dimensional subspace of $V$ is contained in at most one codeword of a $(v,\#C,d;k)_q$ CDC $C$.
If $C$ contains an LMRD $M$, all $(k-d/2+1)$-subspaces in $\Delta$ are covered by codewords in $M$.
More precisely:

\begin{lemma}[{\cite[Lemma~4]{MR3015712}}] \label{lem:etzion_subspace}
Using $2 \le d/2 \le k \le v/2$, each $(k-d/2+1)$-dimensional subspace of $V$, whose nonzero vectors are in $\Delta$, is subspace of exactly one element of a $(v,q^{(v-k)(k-d/2+1)},d;k)_q$ LMRD code.
\end{lemma}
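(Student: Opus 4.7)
The plan is to reduce the containment relation $U \subseteq \operatorname{rowspan}(I_k \mid A)$ to a linear equation in $A$, and then to use the MRD property of $M$ together with a cardinality count to show that equation has a unique solution $A \in M$.

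First I would translate the hypothesis ``all nonzero vectors of $U$ lie in $\Delta$'' into $U \cap \Gamma = \{0\}$. This says exactly that the projection $\pi \colon V \to \mathbb{F}_q^k$ onto the first $k$ coordinates is injective on $U$. Hence $U$ admits a (non-reduced) matrix representation $\operatorname{rowspan}(X \mid Y)$ with $X \in \mathbb{F}_q^{(k-d/2+1) \times k}$ of full row rank $k-d/2+1$ and $Y \in \mathbb{F}_q^{(k-d/2+1) \times (v-k)}$. Because the first $k$ columns of $(I_k \mid A)$ are the identity, a row $(x,y)$ of $(X \mid Y)$ lies in $\operatorname{rowspan}(I_k \mid A)$ if and only if $y = xA$. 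Therefore $U \subseteq \operatorname{rowspan}(I_k \mid A)$ is equivalent to $XA = Y$.

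Next I would consider the $\mathbb{F}_q$-linear map $\phi \colon M \to \mathbb{F}_q^{(k-d/2+1) \times (v-k)}$ defined by $\phi(A) = XA$, and show its kernel is trivial. If $XA = 0$, then every row of $A$ lies in the right kernel of $X$, a subspace of $\mathbb{F}_q^k$ of dimension $k - (k-d/2+1) = d/2-1$. Hence $\rk(A) \le d/2 - 1 < d/2$, and the MRD distance of $M$ forces $A = 0$. This already gives uniqueness: if $A_1,A_2 \in M$ both contain $U$, then $A_1 - A_2 \in \ker\phi = \{0\}$ by linearity of $M$.

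Finally, for existence I would invoke a cardinality count: $\phi$ is injective, $\#M = q^{(v-k)(k-d/2+1)}$ by Lemma-style formula for MRD codes, and the codomain also has cardinality $q^{(v-k)(k-d/2+1)}$. Thus $\phi$ is a bijection and in particular surjective, so some $A \in M$ satisfies $XA = Y$. There is no real obstacle here; the only point that deserves care is the bookkeeping that makes the domain and codomain of $\phi$ have exactly matching cardinalities — this is precisely where the hypothesis $\dim U = k-d/2+1$ is used, and it is what makes the MRD parameters aligned with the LMRD construction.
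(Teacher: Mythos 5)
Your argument is correct, apart from one harmless slip: from $XA=0$ it is the \emph{columns} of $A$ (vectors in $\F_q^k$), not its rows, that lie in the right kernel of $X$; that kernel has dimension $d/2-1$, so the column space of $A$ has dimension at most $d/2-1$ and $\rk(A)\le d/2-1$ as you claim. The rest goes through: the reduction of $U\le\operatorname{rowspan}(I_k\mid A)$ to $XA=Y$ is valid because $U\cap\Gamma=\{0\}$ makes the projection onto the first $k$ coordinates injective on $U$, and the paper's MRD codes are linear by definition, so your map $\phi$ is a linear injection between spaces of equal cardinality $q^{(v-k)(k-d/2+1)}$, hence bijective.

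Your route is genuinely different from the paper's. The paper argues globally by double counting: it computes $\#\gaussmsetminusset{V}{\Gamma}{k-d/2+1}=q^{(v-k)(k-d/2+1)}\gaussmnum{k}{k-d/2+1}{q}$, observes that each of the $q^{(v-k)(k-d/2+1)}$ LMRD codewords contributes $\gaussmnum{k}{k-d/2+1}{q}$ such subspaces, and that the minimum subspace distance forbids any subspace from lying in two codewords; since the pair count matches the total count exactly, ``at most one'' upgrades to ``exactly one.'' Your proof is local and constructive: it exhibits, for a given $U$, the unique $A\in M$ with $U\le\operatorname{rowspan}(I_k\mid A)$ by solving a linear system, and it shows precisely where the rank distance $d/2$ enters (it kills the kernel of $A\mapsto XA$). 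The counting proof is shorter and needs no linearity of $M$ for existence, only the cardinality of an MRD code; your proof gives more structural information (an explicit bijection between $M$ and the fibers over a fixed $X$) at the cost of invoking linearity for the surjectivity step — though even that could be avoided by arguing that $A\mapsto XA$ is injective on any, possibly nonlinear, code of minimum rank distance $d/2$ and then counting. Both proofs are sound; no gap.
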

\begin{proof}
The number of $(k-d/2+1)$-dimensional subspaces in $\Delta$ is
\[\#\gaussmsetminusset{V}{\Gamma}{k-d/2+1} = \gaussmsetminusnum{v}{v-k}{k-d/2+1}{q} = q^{(v-k)(k-d/2+1)} \gaussmnum{k}{k-d/2+1}{q}\text{.}\]
The cardinality of an LMRD code is $q^{(v-k)(k-d/2+1)}$, it contains only nonzero vectors from $\Delta$, and, since each $(k-d/2+1)$-dimensional subspace is contained in exactly one codeword, the statement follows.
\end{proof}

\begin{lemma}\label{lem:subspaces_in_delta}
Any subspace $U$ of $V$ contains a $(\dim(U)-\dim(U \cap \Gamma))$-dimensional subspace whose nonzero vectors are in $\Delta$.
\end{lemma}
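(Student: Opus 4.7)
The plan is to produce the desired subspace as a vector space complement of $U \cap \Gamma$ inside $U$. Concretely, I would set $s := \dim(U \cap \Gamma)$ and $u := \dim(U)$, and choose any subspace $W \le U$ with $U = (U \cap \Gamma) \oplus W$; such a $W$ exists by extending a basis of $U \cap \Gamma$ to a basis of $U$ and taking $W$ to be the span of the added basis vectors. By construction $\dim(W) = u - s$, which matches the target dimension.

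The remaining step is to verify that $W \cap \Gamma = \{0\}$, so that every nonzero vector of $W$ lies in $\Delta = V \setminus \Gamma$. Since $W \subseteq U$, any $w \in W \cap \Gamma$ lies in $U \cap \Gamma$; on the other hand $w \in W$, so $w$ lies in the intersection of the two direct summands, which forces $w = 0$.

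There is no real obstacle here: the statement reduces to the basic fact that subspaces of a vector space admit complements, and the definition of $\Delta$ as the set-theoretic complement of $\Gamma$ in $V$ makes the conclusion immediate once $W$ is chosen. The only thing to be a little careful about is to use $W \subseteq U$ (rather than a complement of $\Gamma$ in $V$) so that $W \cap \Gamma$ can be identified with $W \cap (U \cap \Gamma)$, which is where the direct sum decomposition is used.
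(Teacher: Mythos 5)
Your proof is correct and follows essentially the same route as the paper: extend a basis of $U \cap \Gamma$ to a basis of $U$ and take the complement $W$, whose nonzero vectors lie in $U \setminus (U\cap\Gamma) \subseteq \Delta$. You simply spell out the verification $W \cap \Gamma = W \cap (U\cap\Gamma) = \{0\}$ that the paper leaves implicit.
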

\begin{proof}
By definition of $\Delta$ all vectors in $U \setminus (U \cap \Gamma)$ are in $\Delta$. Then basis extension yields a desired subspace.
\end{proof}

These two lemmata will now show that the non-LMRD codewords in a CDC which contains an LMRD have to have a large intersection with $\Gamma$, which is of course not true for general CDCs.

\begin{lemma}\label{lem:cdc_partition}
Using $2 \le d/2 \le k \le v/2$, any $(v,\#C,d;k)_q$ CDC $C$ that contains an LMRD code $M$ can be partitioned into
\[C = M \ \dot\cup \ \  \dot\bigcup_{t=d/2}^{k} \  S_t\text{,}\]
where $S_t = \{ U \in C \mid \dim(U \cap \Gamma) = t \}$, and
\[d_S(A\cap \Gamma,B\cap \Gamma) \ge d_S(A,B)-2k+a+b\]
for $A \in S_a$ and $B \in S_b$.
\end{lemma}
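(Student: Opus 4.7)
The plan is to establish the two claims separately: first the partition structure of $C$, then the distance inequality. The upper bound $\dim(U \cap \Gamma) \le k$ is immediate from $\dim(U) = k$, and an LMRD codeword is the rowspan of $(I_k \mid A)$, so every nonzero linear combination of its rows is nonzero in the first $k$ coordinates; hence $U \cap \Gamma = \{0\}$ for $U \in M$, which makes $M$ disjoint from every $S_t$ with $t \ge d/2$. This takes care of the easy direction of the partition and of $M$ itself.

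The main obstacle is the lower bound $\dim(U \cap \Gamma) \ge d/2$ for $U \in C \setminus M$, which is what forces the union to start at index $d/2$. I would argue by contradiction: assume $\dim(U \cap \Gamma) \le d/2 - 1$. By Lemma~\ref{lem:subspaces_in_delta}, $U$ contains a subspace of dimension $k - \dim(U \cap \Gamma) \ge k - d/2 + 1$ whose nonzero vectors lie in $\Delta$. Picking any $(k-d/2+1)$-dimensional subspace of it, Lemma~\ref{lem:etzion_subspace} provides a unique LMRD codeword $M'$ containing that subspace. Then $\dim(U \cap M') \ge k - d/2 + 1$, so $d_S(U, M') \le 2k - 2(k - d/2 + 1) = d - 2 < d$. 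Since $C$ has minimum subspace distance $d$, this forces $U = M' \in M$, contradicting the assumption $U \notin M$.

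For the distance inequality with $A \in S_a$ and $B \in S_b$, the key observation is the set-theoretic identity
\[(A \cap \Gamma) \cap (B \cap \Gamma) = (A \cap B) \cap \Gamma \subseteq A \cap B,\]
which gives $\dim\bigl((A\cap\Gamma)\cap(B\cap\Gamma)\bigr) \le \dim(A \cap B) = k - d_S(A,B)/2$. Plugging this into the formula $d_S(A\cap\Gamma, B\cap\Gamma) = a + b - 2\dim\bigl((A\cap\Gamma)\cap(B\cap\Gamma)\bigr)$ yields $d_S(A\cap\Gamma, B\cap\Gamma) \ge a + b - 2k + d_S(A,B)$, as required. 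This last step is routine arithmetic once the partition claim is in place; the real work was the contradiction argument above, which is the only spot where the presence of the LMRD subcode is used in an essential way.
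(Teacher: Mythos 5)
Your proof is correct and follows essentially the same route as the paper: Lemma~\ref{lem:subspaces_in_delta} plus Lemma~\ref{lem:etzion_subspace} to force $\dim(U\cap\Gamma)\ge d/2$ for $U\notin M$, and the containment $(A\cap\Gamma)\cap(B\cap\Gamma)\subseteq A\cap B$ for the distance inequality. Your explicit check that LMRD codewords meet $\Gamma$ trivially (so $M$ is disjoint from every $S_t$) is a small point the paper leaves implicit, but otherwise the arguments coincide.
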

\begin{proof}
A subspace $U \in C$ with $\dim(U \cap \Gamma) \le d/2-1$ yields via Lemma~\ref{lem:subspaces_in_delta} an at least $(k-d/2+1)$-dimensional subspace $W$ with nonzero vectors in $\Delta$. Then Lemma~\ref{lem:etzion_subspace} shows that $W_0 \le W$, $\dim(W_0)=k-d/2+1$, is contained in exactly one codeword in $M$, i.e., $U \in M$. Moreover, using the minimum distance, $W_0$ is in at most one element of $C$.

For $A \in S_a$ and $B \in S_b$ we have $\dim(A \cap B \cap \Gamma) \le \dim(A \cap B) = k-d_S(A,B)/2$, hence $d_S(A\cap \Gamma,B\cap \Gamma)=a+b-2\dim(A \cap B \cap \Gamma) \ge d_S(A,B)-2k+a+b$.
\end{proof}

Using this lemma, we can upper bound the size of a $(v,\#C,d;k)_q$ CDC $C$ that contains an LMRD $M$, for $2 \le d/2 \le k \le v/2$, via
\[\#C = \#M + \sum_{t=d/2}^{k} \#S_t = q^{(v-k)(k-d/2+1)} + \sum_{t=d/2}^{k} \#S_t\text{.}\]

The following trick may be observed in \cite[Theorem~3]{ahlswede2009error}.

\begin{lemma}\label{lem:subspaces_upper_bound}
Let $l<2m$ be an integer and $A_i \subseteq \gaussmset{V}{i}$ for $m \le i \le M$ such that $d_S(U,W) \ge \dim(U) + \dim(W) -l$ for $U \ne W \in \bigcup_{i=m}^{M} A_i$. Then
\[ \# \bigcup_{i=m}^{M} A_i \le A_q(v,2m-l;m)\text{.} \]
\end{lemma}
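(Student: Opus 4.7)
The plan is to reduce the problem to a standard CDC bound by shrinking every subspace in $\bigcup_i A_i$ down to dimension~$m$ and showing the resulting collection is a constant-dimension code with minimum distance at least $2m-l$.

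More precisely, for every $U \in \bigcup_{i=m}^{M} A_i$ (so $\dim U \ge m$), pick an arbitrary $m$-dimensional subspace $\mathcal{H}_m(U) \le U$. The key computation is the following: for any distinct $U,W \in \bigcup_i A_i$, the hypothesis $d_S(U,W) \ge \dim(U)+\dim(W)-l$ combined with the identity $d_S(U,W)=\dim(U)+\dim(W)-2\dim(U\cap W)$ gives $\dim(U\cap W) \le l/2$. Since $\mathcal{H}_m(U) \cap \mathcal{H}_m(W) \subseteq U \cap W$, we get $\dim(\mathcal{H}_m(U) \cap \mathcal{H}_m(W)) \le l/2$, hence $d_S(\mathcal{H}_m(U),\mathcal{H}_m(W)) = 2m - 2\dim(\mathcal{H}_m(U) \cap \mathcal{H}_m(W)) \ge 2m-l$.

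The assumption $l < 2m$ enters to ensure injectivity of $U \mapsto \mathcal{H}_m(U)$: if $\mathcal{H}_m(U) = \mathcal{H}_m(W)$ for $U \ne W$, then $\dim(\mathcal{H}_m(U) \cap \mathcal{H}_m(W)) = m > l/2$, contradicting the bound just derived. Thus the image is a subset of $\gaussmset{V}{m}$ of the same cardinality as $\bigcup_i A_i$, and every pair of its elements has subspace distance at least $2m-l$. By the definition of $A_q$, its cardinality is at most $A_q(v,2m-l;m)$, which proves the lemma.

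I do not expect a real obstacle here; the only subtlety is checking that the strict inequality $l<2m$ is exactly what is needed to make the truncation map injective, and that inclusion of intersections is enough to carry the distance bound from dimension $\dim U + \dim W$ down to dimension $2m$.
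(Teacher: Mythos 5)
Your proof is correct and follows essentially the same route as the paper: replace each $U$ by an arbitrary $m$-dimensional subspace $\mathcal{H}_m(U)\le U$, deduce $\dim(U\cap W)\le l/2$ from the distance hypothesis, and use containment of intersections plus $l<2m$ to get both the minimum distance $2m-l$ and the injectivity of the truncation map. No gaps.
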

\begin{proof}
For each $m \le i \le M$, we define $B_i=\{ \mathcal{H}_m(U) \mid U \in A_i \}$.
Then the set $C=\bigcup_{i=m}^{M} B_i$ is a $(v,\#\bigcup_{i=m}^{M} A_i,2m-l;m)_q$ CDC.
The cardinality follows from the minimum distance, i.e., for $\tilde{U} \ne \tilde{W} \in C$ such that $U \in A_u$ yielded $\tilde{U}$ and $W \in A_w$ yielded $\tilde{W}$, we have $u+w-l \le d_S(U,W) = u+w-2\dim(U \cap W) \Rightarrow \dim(\tilde{U} \cap \tilde{W}) \le \dim(U \cap W) \le l/2$ and $d_S(\tilde{U},\tilde{W})=2(m-\dim(\tilde{U}\cap \tilde{W})) \ge 2(m-l/2) >0$.
\end{proof}

\begin{proposition}[cf. {\cite[Theorem~10]{MR3015712}}] \label{prop:1}
For $2 \le d/2 \le k \le v/2$ let $C$ be a $(v,\#C,d;k)_q$ CDC that contains an LMRD code where $k < d$. Then
\[\#C \le q^{(v-k)(k-d/2+1)} + A_q(v-k,2(d-k);d/2)\text{.}\]
\end{proposition}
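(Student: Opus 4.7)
My plan is to combine the partition from Lemma~\ref{lem:cdc_partition} with the projection-to-ambient trick from Lemma~\ref{lem:subspaces_upper_bound}. Write $C = M \,\dot\cup\, \dot\bigcup_{t=d/2}^{k} S_t$ as in Lemma~\ref{lem:cdc_partition}. Since $\#M = q^{(v-k)(k-d/2+1)}$, it suffices to show $\sum_{t=d/2}^{k}\#S_t \le A_q(v-k,2(d-k);d/2)$.

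The natural candidate is the map $\varphi:U\mapsto U\cap\Gamma$, which sends each $U\in S_t$ to a $t$-dimensional subspace of $\Gamma\cong\mathbb{F}_q^{v-k}$. From Lemma~\ref{lem:cdc_partition} we have
\[
d_S(A\cap\Gamma,B\cap\Gamma) \;\ge\; d_S(A,B) - 2k + a + b \;\ge\; d - 2k + a + b
\]
for $A\in S_a$, $B\in S_b$. Setting $A_t := \varphi(S_t)\subseteq\gaussmset{\Gamma}{t}$, the family $\{A_t\}_{t=d/2}^{k}$ is then positioned in the $(v-k)$-dimensional space $\Gamma$, with the distance bound $d_S(\tilde U,\tilde W) \ge \dim(\tilde U)+\dim(\tilde W)-(2k-d)$ required by Lemma~\ref{lem:subspaces_upper_bound}, taking $l=2k-d$ and $m=d/2$. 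The hypothesis $l<2m$ of that lemma reduces to $2k-d<d$, i.e. $k<d$, which is exactly the assumption of the proposition; this is the place where the hypothesis $k<d$ gets used.

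Before invoking Lemma~\ref{lem:subspaces_upper_bound}, I need $\varphi$ to be injective on $\dot\bigcup_t S_t$, so that $\#A_t = \#S_t$ and $\#\bigcup_t A_t = \sum_t\#S_t$. But the same distance inequality yields, for distinct $A,B$ with $a,b\ge d/2$, the lower bound $d_S(A\cap\Gamma,B\cap\Gamma)\ge 2d-2k>0$; hence $A\cap\Gamma\ne B\cap\Gamma$ and injectivity is automatic. Applying Lemma~\ref{lem:subspaces_upper_bound} to the $A_t$ inside $\Gamma$ then gives
\[
\sum_{t=d/2}^{k}\#S_t \;=\; \#\bigcup_{t=d/2}^{k} A_t \;\le\; A_q(v-k,\,2m-l;\,m) \;=\; A_q(v-k,\,2(d-k);\,d/2),
\]
and adding the LMRD contribution yields the claimed bound.

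I expect the only genuinely delicate point to be verifying the hypotheses of Lemma~\ref{lem:subspaces_upper_bound}: namely tracking that the inequality $d_S(A\cap\Gamma,B\cap\Gamma)\ge a+b-(2k-d)$ from Lemma~\ref{lem:cdc_partition} is tight enough to fit the template, and checking that $k<d$ is precisely what makes $2k-d<d$ so that the lemma applies; everything else is bookkeeping over the partition.
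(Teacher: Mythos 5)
Your proposal is correct and follows essentially the same route as the paper: partition via Lemma~\ref{lem:cdc_partition}, then apply Lemma~\ref{lem:subspaces_upper_bound} with $A_i=\{U\cap\Gamma\mid U\in S_i\}$, $m=d/2$, $M=k$, $l=2k-d$, where $k<d$ is exactly the condition $l<2m$. Your explicit check that $U\mapsto U\cap\Gamma$ is injective on $\dot\bigcup_t S_t$ is a point the paper leaves implicit, and it is a welcome addition.
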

\begin{proof}
Using Lemma~\ref{lem:cdc_partition}, we only have to upper bound the size of $\dot\bigcup_{t=d/2}^{k} S_t$.
Applying Lemma~\ref{lem:subspaces_upper_bound} with $A_i = \{U \cap \Gamma \mid U \in S_i\} \subseteq \gaussmset{\Gamma}{i}$, $m=d/2$, $M=k$, and $l=2k-d$ (cf. Lemma~\ref{lem:cdc_partition}) is possible since $0 < 2m-l = 2(d-k) \Leftrightarrow k < d$.
\end{proof}

The special case of $d=2(k-1)$ and $k \ge 3$ was already proved in \cite[Theorem~10]{MR3015712}.

Next, we generalize \cite[Theorem~11]{MR3015712} and need therefore two technical lemmata.

\begin{lemma}\label{lem:something_le_zero}
Let $c,k,q,t,t_0,y$ be integers where $q$ is a prime power, $y \ne 0$, and $c \le k-t$ as well as $t_0 \le t$.
Then
\[\gaussmsetminusnum{k}{t_0}{c}{q} \gaussmnum{t_0}{y}{q} \le \gaussmsetminusnum{k}{t}{c}{q} \gaussmnum{t}{y}{q}\text{.}\]
\end{lemma}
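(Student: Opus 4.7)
The plan is to view the left-hand side as a value of $f(t):=q^{tc}\gaussmnum{k-t}{c}{q}\gaussmnum{t}{y}{q}$ and to show that $f$ is non-decreasing in $t$ on the interval $\{t_0,\ldots,t\}$. The trivial case $t_0<y$ gives $\gaussmnum{t_0}{y}{q}=0$ and hence $f(t_0)=0\le f(t)$ immediately, so I may assume $y\le t_0$. At every intermediate index $t'\in\{t_0,\ldots,t-1\}$ one then still has $y\le t'$ and $c\le k-t'-1$ (from $c\le k-t$), so by telescoping it suffices to prove the one-step inequality $f(t')\le f(t'+1)$ under these two assumptions.

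For a single step, the ratio $f(t'+1)/f(t')$ simplifies via the standard identities $\gaussmnum{n-1}{c}{q}/\gaussmnum{n}{c}{q}=[n-c]_q/[n]_q$ and $\gaussmnum{t'+1}{y}{q}/\gaussmnum{t'}{y}{q}=[t'+1]_q/[t'+1-y]_q$, so the task reduces to
\[q^c\,[k-t'-c]_q\,[t'+1]_q \;\ge\; [k-t']_q\,[t'+1-y]_q\text{.}\]
Clearing the $(q-1)$-denominators and substituting $a=q^{k-t'}$, $b=q^{t'+1}$, $r=q^c$, $s=q^y$ turns this into the polynomial inequality $s(a-r)(b-1)\ge (a-1)(b-s)$, which after expansion becomes
\[b\bigl(a(s-1)-(sr-1)\bigr)+s(r-1)\;\ge\;0\text{.}\]

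Because $r\ge 1$, the summand $s(r-1)$ is non-negative, so it is enough to show that the bracket is non-negative. The hypothesis $c\le k-t'-1$ gives $a\ge qr$, while $y\ge 1$ together with $q\ge 2$ gives $s\ge q\ge 2$ and $(q-1)s\ge q$. Combining these yields $a(s-1)\ge qr(s-1)=r(qs-q)\ge rs$, so $a(s-1)-(sr-1)\ge 1$, and the one-step inequality follows.

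The only real obstacle is sign bookkeeping: the two $q$-binomial factors in $f(t)$ move in opposite directions as $t$ grows, so the hypothesis $c\le k-t$ (which bounds the shrinking factor $\gaussmnum{k-t}{c}{q}$ from below via $a\ge qr$) must be carefully paired with $y\ge 1$ (which forces the growing factor $\gaussmnum{t}{y}{q}$ to dominate via $s\ge q$). The substitution $(a,b,r,s)$ is what makes the combined estimate transparent; once the inequality is brought to the bracketed form above, the bound falls out from elementary manipulations of exponentials of $q$.
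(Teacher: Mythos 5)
Your proof is correct and takes essentially the same route as the paper's: both establish monotonicity of $t\mapsto\gaussmsetminusnum{k}{t}{c}{q}\gaussmnum{t}{y}{q}$ by bounding ratios of consecutive values, the only difference being that the paper dispatches the per-step factors with the standard estimate $[x]_q/[y]_q\le q^{x-y}$ while you expand each step into a polynomial inequality in powers of $q$ and verify it directly (more laborious, equally valid). The one point to patch is that you invoke $r\ge 1$ (i.e.\ $c\ge 0$) and $y\ge 1$ although the hypotheses only give $c\le k-t$ and $y\ne 0$; for $c<0$ or $y<0$ both sides of the claimed inequality vanish by the convention on $q$-binomial coefficients, so these degenerate cases (which the paper lists as ``obvious'') must be noted separately before your ratio computation --- which divides by $f(t')$ and uses the factorial forms of the binomials --- becomes legitimate.
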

\begin{proof}
Since $t_0 = t$, $c<0$, $y<0$, and $t_0 < y$ as well as $c=0$ are obvious, we assume $1 \le c$ and $1 \le y \le t_0 < t$.
\begin{align*}
&
\frac{\gaussmsetminusnum{k}{t_0}{c}{q} \gaussmnum{t_0}{y}{q}}{\gaussmsetminusnum{k}{t}{c}{q} \gaussmnum{t}{y}{q}} q^{c(t-t_0)}
=
\frac{\gaussmnum{k-t_0}{c}{q} \gaussmnum{t_0}{y}{q}}{\gaussmnum{k-t}{c}{q} \gaussmnum{t}{y}{q}}
=
\frac{[k-t_0]_q! [t_0]_q! [k-t-c]_q! [t-y]_q!}{[k-t]_q! [t]_q! [k-t_0-c]_q! [t_0-y]_q!}
\\
&
=
\prod_{i=t_0+1}^{t}\frac{[k-i-c]_q [i-y]_q}{[k-i]_q [i]_q}
\le
\prod_{i=t_0+1}^{t}q^{-c}q^{-y}
=
q^{-(c+y)(t-t_0)}
\end{align*}
The exponent is negative and therefore we have $\le q^{c(t-t_0)}$.

\end{proof}

Note that the restriction $t_0 \le t$ is the reason for the fixation of $t_0=d/2$ before Proposition~\ref{prop:2}.

\begin{lemma} \label{lem:NtY}
Using the notation of Lemma~\ref{lem:cdc_partition}, let $c$, $t$, and $y$ be integers with $0 \le y \le k$, $d/2 \le t \le k$, and $k-d/2+1 \le c+y$.
Let $N_{t,Y} = \{U \in S_t \mid Y \le U\}$ for each $Y \in \gaussmset{\Gamma}{y}$ with $0 \le y \le k$ and $d/2 \le t \le k$.\footnote{Note that we deliberately use $t < y \le k$ with $N_{t,Y} = \emptyset$.}
Then we have:
\[\sum_{Y \in \gaussmset{\Gamma}{y}} \#N_{t,Y} = \#S_t \cdot \gaussmnum{t}{y}{q}\text{.}\]
Moreover for all $Y \in \gaussmset{\Gamma}{y}$ we have:
\[\sum_{t=d/2}^{k-c} \#N_{t,Y} \cdot \gaussmsetminusnum{k}{t}{c}{q} \le \gaussmsetminusnum{v}{v-k}{c}{q}\text{.}\]
\end{lemma}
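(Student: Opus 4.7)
The plan is to treat both claims as double-counting identities centered on the fixed subspace $Y$, with the inequality additionally using a uniqueness-of-containing-codeword argument driven by the minimum distance $d$.

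For the equality I would count incidence pairs $(Y, U)$ with $Y \in \gaussmset{\Gamma}{y}$ and $U \in N_{t,Y}$ in two ways. Summing with $Y$ outside gives $\sum_Y \#N_{t,Y}$ directly. Summing with $U$ outside uses that $Y \le U$ and $Y \le \Gamma$ together force $Y \le U \cap \Gamma$, a subspace of dimension exactly $t$ by the definition of $S_t$; this produces $\gaussmnum{t}{y}{q}$ admissible $Y$ per codeword $U$, giving $\#S_t \cdot \gaussmnum{t}{y}{q}$ in total. (The degenerate case $y > t$ is consistent, since then both $N_{t,Y}$ is empty and $\gaussmnum{t}{y}{q}=0$.)

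For the inequality I would fix $Y$ and count pairs $(U, W)$ with $U \in N_{t,Y}$ for some $d/2 \le t \le k-c$ and $W \le U$ a $c$-dimensional subspace with $W \cap \Gamma = \{0\}$. Because $W \le U$, the condition $W \cap \Gamma = \{0\}$ coincides with $W \cap (U \cap \Gamma) = \{0\}$, so the number of valid $W$ for a given $U \in S_t$ is exactly $\gaussmsetminusnum{k}{t}{c}{q}$, reconstituting the left-hand side. I would then bound this count by showing that the projection $(U, W) \mapsto W$ is injective into $\gaussmsetminusset{V}{\Gamma}{c}$, whose cardinality is $\gaussmsetminusnum{v}{v-k}{c}{q}$.

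The decisive step is injectivity, and this is where the hypothesis $c+y \ge k-d/2+1$ enters. The inclusions $Y \le \Gamma$ and $W \cap \Gamma = \{0\}$ give $W \cap Y = \{0\}$, so $\dim(W+Y) = c + y \ge k - d/2 + 1$; any two codewords of $C$ both containing $W+Y$ would intersect in at least $k - d/2 + 1$ dimensions, contradicting $d_S \ge d$. Hence $U$ is recovered from $W$ as the unique codeword of $C$ containing $W + Y$, which pins down the preimage and completes the injection into $\gaussmsetminusset{V}{\Gamma}{c}$. The main obstacle is precisely spotting this auxiliary object: once one sees that pairing the independently chosen $W$ with the fixed $Y$ lifts the dimension above the uniqueness threshold, everything else is straightforward incidence counting.
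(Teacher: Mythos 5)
Your proof is correct and follows essentially the same route as the paper: the equality is the identical double count of incidences $(Y,U)$, and your injectivity of $(U,W)\mapsto W$ is precisely the paper's disjointness of $\dot\bigcup_{t}\dot\bigcup_{U\in N_{t,Y}}\gaussmsetminusset{U}{\Gamma}{c}$ inside $\gaussmsetminusset{V}{\Gamma}{c}$, justified by the same observation that $\dim(W+Y)=c+y\ge k-d/2+1$ forces a unique containing codeword.
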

\begin{proof}
The equation follows from double-counting the set $\{(Y,U) \in \gaussmset{\Gamma}{y} \times S_t \mid Y \le U\}$.

For the inequality, we have $0$ on the left hand side if $c<0$ or $k-d/2<c$, i.e., we assume $0 \le c \le k-d/2$.
The statement follows from counting
\[\dot\bigcup_{t=d/2}^k\dot\bigcup_{U \in N_{t,Y}}\gaussmsetminusset{U}{\Gamma}{c}\subseteq\gaussmsetminusset{V}{\Gamma}{c}\text{.}\]
The left hand side is disjoint because for fixed $Y$ there is, using $\dim(\langle Y, R \rangle) = y+c \ge k-d/2+1$, at most one element $W \in C$ with $\langle Y, R \rangle \le W$, where $R \in \gaussmsetminusset{V}{\Gamma}{c}$.

Furthermore $\gaussmsetminusset{U}{\Gamma}{c} = \emptyset$ for $k-c<t$ and $U \in N_{t,Y}$.
\end{proof}

In particular, we have for all integers $c$, $t_0$, and $y$ with $0 \le y \le k$, $k-d/2+1 \le c+y$, $Y \in \gaussmset{\Gamma}{y}$, and $d/2 \le t_0 \le k$, as well as $0 \le c \le k-t_0$:
\[\#N_{t_0,Y} \le \frac{\gaussmsetminusnum{v}{v-k}{c}{q} -\sum_{t=d/2, t \ne t_0}^{k-c} \#N_{t,Y} \cdot \gaussmsetminusnum{k}{t}{c}{q}}{\gaussmsetminusnum{k}{t_0}{c}{q}}\text{.}\]

In the successive discussion, we fix $t_0=d/2$ (cf. Lemma~\ref{lem:something_le_zero}), to ease the notation significantly while maintaining the same level of detail: The second summand of the last part of the proof of the next proposition would not vanish for other $t_0$.

\begin{proposition}[cf. {\cite[Theorem~11]{MR3015712}}] \label{prop:2}
For $2 \le d/2 \le k \le v/2$ let $C$ be a $(v,\#C,d;k)_q$ CDC that contains an LMRD code for integers $c$ and $y$ such that
$1 \le y \le d/2$,
$1 \le c \le \min\{k-d/2,d/2\}$, and
$k-d/2+1 \le c+y$.
Then
\begin{align*}
\#C
\le
q^{(v-k)(k-d/2+1)}
+
\frac{\gaussmnum{v-k}{y}{q}\gaussmnum{k}{c}{q}}{\gaussmnum{k-d/2}{c}{q}\gaussmnum{d/2}{y}{q}}
q^{c(v-k-d/2)}
+
A_q(v-k,d-2(c-1);k-c+1)
\text{.}
\end{align*}
\end{proposition}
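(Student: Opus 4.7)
The plan is to start from Lemma~\ref{lem:cdc_partition}, which already gives $\#C = q^{(v-k)(k-d/2+1)} + \sum_{t=d/2}^{k} \#S_t$, and to bound the remaining sum by splitting the range of $t$ at $t=k-c$: the tail $t\in\{k-c+1,\ldots,k\}$ will produce the third summand of the claimed inequality, and the head $t\in\{d/2,\ldots,k-c\}$ will produce the middle summand.

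For the tail, I would set $A_i = \{U\cap\Gamma \mid U\in S_i\} \subseteq \gaussmset{\Gamma}{i}$ for $k-c+1\le i\le k$ and apply Lemma~\ref{lem:subspaces_upper_bound} inside $\Gamma$ with $m=k-c+1$ and $l=2k-d$; the choice of $l$ is dictated by the distance inequality at the end of Lemma~\ref{lem:cdc_partition}, and the hypothesis $c\le d/2$ ensures $2m-l=d-2(c-1)>0$, so the lemma is applicable. This immediately yields $\sum_{t=k-c+1}^{k}\#S_t \le A_q(v-k,d-2(c-1);k-c+1)$.

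For the head, I would take the displayed inequality right after Lemma~\ref{lem:NtY} with $t_0=d/2$ and sum it over all $Y\in\gaussmset{\Gamma}{y}$. Using the identity $\sum_Y \#N_{t,Y} = \#S_t\cdot\gaussmnum{t}{y}{q}$ from the same lemma, together with $\sum_Y \gaussmsetminusnum{v}{v-k}{c}{q} = \gaussmnum{v-k}{y}{q}\gaussmsetminusnum{v}{v-k}{c}{q}$, this produces
\[ \#S_{d/2}\,\gaussmnum{d/2}{y}{q}\gaussmsetminusnum{k}{d/2}{c}{q} + \sum_{t=d/2+1}^{k-c} \#S_t\,\gaussmnum{t}{y}{q}\gaussmsetminusnum{k}{t}{c}{q} \le \gaussmnum{v-k}{y}{q}\gaussmsetminusnum{v}{v-k}{c}{q}. \]
Now Lemma~\ref{lem:something_le_zero} (valid on the whole head range because $y\le d/2 = t_0 \le t$ and $c\le k-t$) says that $\gaussmnum{t}{y}{q}\gaussmsetminusnum{k}{t}{c}{q}$ is minimised at $t=d/2$, so I can replace every coefficient on the left-hand side by the smaller quantity $\gaussmnum{d/2}{y}{q}\gaussmsetminusnum{k}{d/2}{c}{q}$, factor it out of the sum, and divide through. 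Substituting $\gaussmsetminusnum{w}{u}{c}{q}=q^{uc}\gaussmnum{w-u}{c}{q}$ then recovers exactly the middle summand $\gaussmnum{v-k}{y}{q}\gaussmnum{k}{c}{q}/\bigl(\gaussmnum{k-d/2}{c}{q}\gaussmnum{d/2}{y}{q}\bigr)\cdot q^{c(v-k-d/2)}$.

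The main obstacle will be this last reduction: the mixed coefficients $\gaussmnum{t}{y}{q}\gaussmsetminusnum{k}{t}{c}{q}$ differ from term to term and must collapse to a single common factor before $\sum_t \#S_t$ can be extracted. This is precisely why $t_0$ has to be fixed at the minimum of the head range, as the author remarks between Lemma~\ref{lem:something_le_zero} and Proposition~\ref{prop:2}: any larger choice of $t_0$ would leave the indices $t<t_0$ with coefficients strictly larger than $\gaussmnum{t_0}{y}{q}\gaussmsetminusnum{k}{t_0}{c}{q}$, so the monotonicity of Lemma~\ref{lem:something_le_zero} could not be used to absorb them, and the extra positive contribution would spoil the bound. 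The side conditions $1\le y\le d/2$, $1\le c\le\min\{k-d/2,d/2\}$, and $c+y\ge k-d/2+1$ in the proposition are exactly what is needed to keep Lemma~\ref{lem:NtY} and Lemma~\ref{lem:something_le_zero} applicable throughout the head range.
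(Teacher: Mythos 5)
Your proposal is correct and follows essentially the same route as the paper's proof: the same split of $\sum_t \#S_t$ at $t=k-c$, Lemma~\ref{lem:subspaces_upper_bound} with $m=k-c+1$, $l=2k-d$ for the tail, and Lemma~\ref{lem:NtY} combined with Lemma~\ref{lem:something_le_zero} at $t_0=d/2$ for the head. Your reorganization (replacing each coefficient $\gaussmnum{t}{y}{q}\gaussmsetminusnum{k}{t}{c}{q}$ by its minimum at $t=d/2$ and factoring out, rather than isolating $\#S_{d/2}$ and showing the residual sum is nonpositive) is algebraically equivalent to the paper's argument.
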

\begin{proof}
Using Lemma~\ref{lem:cdc_partition} we only have to upper bound $\sum_{t=d/2+1}^{k} \#S_t + \#S_{d/2}$. Applying Lemma~\ref{lem:NtY}, we get:

\begin{align*}
\#S_{d/2}
&
=
\frac{\sum_{Y \in \gaussmset{\Gamma}{y}} \#N_{d/2,Y}}{\gaussmnum{d/2}{y}{q}}
\le
\sum_{Y \in \gaussmset{\Gamma}{y}}
\frac{\gaussmsetminusnum{v}{v-k}{c}{q} -\sum_{t=d/2+1}^{k-c} \#N_{t,Y} \gaussmsetminusnum{k}{t}{c}{q}}{\gaussmsetminusnum{k}{d/2}{c}{q} \gaussmnum{d/2}{y}{q}}
\\
&
=
\frac{\gaussmnum{v-k}{y}{q} \gaussmsetminusnum{v}{v-k}{c}{q} -\sum_{t=d/2+1}^{k-c} \#S_t \gaussmnum{t}{y}{q} \gaussmsetminusnum{k}{t}{c}{q}}{\gaussmsetminusnum{k}{d/2}{c}{q} \gaussmnum{d/2}{y}{q}}
\text{.}
\end{align*}

Hence

\begin{align*}
&
\sum_{t=d/2+1}^{k} \#S_t + \#S_{d/2}
\le
\frac{\gaussmnum{v-k}{y}{q} \gaussmsetminusnum{v}{v-k}{c}{q}}{\gaussmsetminusnum{k}{d/2}{c}{q} \gaussmnum{d/2}{y}{q}}
\\
&
+
\frac{\sum_{t=d/2+1}^{k-c} \#S_t \left(\gaussmsetminusnum{k}{d/2}{c}{q} \gaussmnum{d/2}{y}{q}-\gaussmnum{t}{y}{q} \gaussmsetminusnum{k}{t}{c}{q}\right)}{\gaussmsetminusnum{k}{d/2}{c}{q} \gaussmnum{d/2}{y}{q}}
+
\sum_{t=k-c+1}^{k} \#S_t
\text{.}
\end{align*}

Now we apply Lemma~\ref{lem:something_le_zero} for $t_0=d/2$ and $d/2+1 \le t \le k-c$, and thereby upper bound the second summand with zero.

The last summand can be upper bounded by utilizing again Lemma~\ref{lem:subspaces_upper_bound} with $A_i = \{U \cap \Gamma \mid U \in S_i\} \subseteq \gaussmset{\Gamma}{i}$, $m=k-c+1$, $M=k$, which is possible since $1 \le c$, and $l=2k-d$ (cf. Lemma~\ref{lem:cdc_partition}), using $0 < 2m-l = d-2(c-1) \Leftrightarrow c \le d/2$. This upper bounds the last summand with $A_q(v-k,d-2(c-1);k-c+1)$.

\end{proof}

The special case of $d=k$ even, $c=1$, $y=d/2$ was already proved in \cite[Theorem~11]{MR3015712}.

\section{Comparison of the bounds}\label{sec:comparisonbounds}

First, small values of $y(c)$ are better.

\begin{remark}\label{rem:optimal_choice_y}
Using $2 \le d/2 \le k \le v/2$, the function $f(y)=\gaussmnum{v-k}{y}{q} / \gaussmnum{d/2}{y}{q} = \prod_{i=0}^{y-1} \frac{q^{v-k}-q^i}{q^{d/2}-q^i}$ is monotonically increasing for $1 \le y \le d/2$.
Hence for fixed $c$ the optimal choice for $y$ is $\max\{1,k-d/2+1-c\}$, which implies $\max\{1,k-d+1\} \le c \le \min\{k-d/2,d/2\}$.
Note that such a $c$ exists iff $d/2 < k < 3d/2$.
\end{remark}

Second, small values of $c$ are better for the third summand of Proposition~\ref{prop:2}.

\begin{lemma}\label{lem:third_summand}
For a prime power $q$ and integers $v \ge 0$ and $k \ne 0$, we have
\[A_q(v,d;k) \le A_q(v,d-2;k-1)\text{.}\]
\end{lemma}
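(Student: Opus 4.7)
The natural approach is a ``shrink the codewords'' reduction: from any $(v,M,d;k)_q$ CDC $C$ I will build a $(v,M,d-2;k-1)_q$ CDC $C'$ by replacing each codeword by a carefully chosen $(k-1)$-dimensional subspace of itself. This transfers the extremal size directly and yields $A_q(v,d;k) = M \le A_q(v,d-2;k-1)$.

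Concretely, I would take a maximum $(v,A_q(v,d;k),d;k)_q$ CDC $C$, pick for each $U\in C$ an arbitrary $(k-1)$-subspace $U':=\mathcal{H}_{k-1}(U)$, and set $C'=\{U':U\in C\}$. Two things need checking. First, the assignment $U\mapsto U'$ is injective: if $U\ne W$ in $C$ then the distance condition forces $\dim(U\cap W)\le k-d/2$, while $U'=W'$ would embed a $(k-1)$-dimensional space into $U\cap W$, contradicting $k-1>k-d/2$ whenever $d\ge 4$. Second, because $U'\le U$ and $W'\le W$ we have $U'\cap W'\le U\cap W$, so
\[d_S(U',W') = 2(k-1)-2\dim(U'\cap W') \ge 2(k-1)-(2k-d) = d-2.\]
Hence $C'$ is a valid $(v,M,d-2;k-1)_q$ CDC, which gives the claimed inequality.

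The only subtlety is the boundary regime $d\le 2$ or $k=1$, where the injectivity argument above degenerates and the statement itself has to be read with care; but in the only place the lemma is used in this paper --- inside the comparison following Proposition~\ref{prop:2}, where $c\le d/2$ ensures $d-2(c-1)\ge 2$ together with $k\ge d/2\ge 2$ --- we are safely in the range $d\ge 4$, $k\ge 2$, so the reduction above applies verbatim. I do not expect a genuine technical obstacle: once the incidence inequalities $\dim(U'\cap W')\le\dim(U\cap W)\le k-d/2$ are in hand, both the injectivity and the distance bound are one-liners, and basis extension supplies the required $U'$ for every codeword.
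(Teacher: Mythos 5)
Your reduction is correct, and it takes a genuinely different --- and more direct --- route than the paper. The paper's proof is purely numerical: it chains the Singleton bound (Lemma~\ref{lem:singleton_bound}) as an upper estimate for $A_q(v,d;k)$ through the LMRD cardinality $q^{(v-k+1)(k-d/2+1)}$ as a lower estimate for $A_q(v,d-2;k-1)$, using the $\mu(q)$ approximation of $q$-binomial coefficients and a three-way case distinction according to the position of $k$ relative to $v/2$. You instead exhibit an explicit size-preserving map of codes: replace each codeword $U$ of a maximum $(v,M,d;k)_q$ code by $\mathcal{H}_{k-1}(U)$, and the chain $\dim(U'\cap W')\le\dim(U\cap W)\le k-d/2$ gives both injectivity and $d_S(U',W')\ge d-2$. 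This is in fact exactly an instance of the paper's own Lemma~\ref{lem:subspaces_upper_bound}, applied with $A_k=C$, $A_{k-1}=\emptyset$, $m=k-1$, $M=k$, $l=2k-d$; the hypothesis $l<2m$ of that lemma is precisely $d>2$. So your argument fits the paper's existing toolkit while avoiding all estimates and case splits, and it makes transparent where $d\ge 3$ is needed. Your caveat about the boundary is well placed and in fact sharper than the paper's treatment: for $d\le 2$ the claimed inequality reads $\gaussmnum{v}{k}{q}\le\gaussmnum{v}{k-1}{q}$, which fails for $2\le k\le v/2$, so the lemma genuinely requires $d\ge 3$ (the paper's proof merely declares $d\le 2$ ``obvious''); since the lemma is only invoked after Proposition~\ref{prop:2} with effective distance at least $4$, both proofs cover the range that is actually used. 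The only thing the paper's computation buys beyond your construction is the explicit comparison of the Singleton bound at $(d,k)$ with the LMRD size at $(d-2,k-1)$, which the statement does not require; your proof is the cleaner one.
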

\begin{proof}
For $k<0$, $v<k$, $2k < d$, $v \le 1$, or $d \le 2$ the statement is obvious.
For odd $d$ we can use $\tilde{d}=d+1$ due to $A_q(v,d;k)=A_q(v,d+1;k)$.
We estimate the left hand side with the Singleton bound and the right hand side with the size of an LMRD code. Since both bounds depend on whether $k \le v/2$, we have these three cases:

If $k \le v/2$ then
\begin{align*}
A_q(v,d;k)
\le
\gaussmnum{v-d/2+1}{v-k}{q}
\le
\mu(q) q^{(v-k)(k-d/2+1)}
\le
q^{(v-k+1)(k-d/2+1)}
\le
A_q(v,d-2;k-1)
\end{align*}
which is true for $q \ge 3$, since $\mu(q) \le q \le q^{k-d/2+1}$, and $q=2$ with $2 \le k-d/2+1$. For $q=2$ and $d=2k$, the Singleton bound is $\gaussmnum{v-k+1}{1}{2}=2^{v-k+1}-1$ yielding the result.

If $v/2 \le k-1$ then
\begin{align*}
&
A_q(v,d;k)
=
A_q(v,d;v-k)
\le
\gaussmnum{v-d/2+1}{k}{q}
\le
\mu(q) q^{k(v-k-d/2+1)}
\\
&
\le
q^{(k-1)(v-k-d/2+3)}
\le
A_q(v,d-2;v-k+1)
=
A_q(v,d-2;k-1)
\end{align*}
which is true for $1 \le k$, since $\mu(q) \le q^2 \le q^{3k-3-v+d/2}$, i.e., $v+5 \le 2k+3 \le 2k+1+d/2 \le 3k+d/2$.

If $v$ is odd and $k = (v+1)/2$ then
\begin{align*}
&
A_q(v,d;k)
=
A_q(v,d;(v+1)/2)
=
A_q(v,d;(v-1)/2)
\le
\gaussmnum{v-d/2+1}{(v+1)/2}{q}
\\
&
\le
\mu(q) q^{((v-1)/2-d/2+1)(v+1)/2}
\le
q^{((v-1)/2-d/2+2)(v+1)/2}
\\
&
\le
A_q(v,d-2;(v-1)/2)
=
A_q(v,d-2;k-1)
\end{align*}
which is true for $3 \le v$ since $\mu(q) \le q^2 \le q^{(v+1)/2}$.
\end{proof}

The second summand of Proposition~\ref{prop:2} is monotonically increasing in $c$ and therefore smaller values of $c$ are also better for this term.

\begin{lemma}\label{lem:second_summand}
For integers $c$, $d$, $k$, $q$, $v$, and $y(c)$ such that $q$ is a prime power, $2 \le d/2 \le k \le v/2$, $0 \le c \le k-d/2-1$, $0 \le y(c) \le d/2$, $2 \le d/2$, and $2 \le v-k$ let
\[f(c)=\frac{\gaussmnum{v-k}{y(c)}{q}\gaussmnum{k}{c}{q}}{\gaussmnum{k-d/2}{c}{q}\gaussmnum{d/2}{y(c)}{q}} q^{c(v-k-d/2)}\text{.}\]

If $y(c+1) = y(c)$ or $y(c+1) = y(c)-1 \ge 0$, then $f(c) \le f(c+1)$.
\end{lemma}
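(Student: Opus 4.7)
The plan is to form the ratio $f(c+1)/f(c)$ explicitly and estimate it in each of the two cases.

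First I would split the factors of $f(c)$ according to which variable they depend on. After multiplying out the $q$-binomials one sees that the parts depending only on $c$ contribute
\[\frac{[k-c]_q}{[k-d/2-c]_q}\cdot q^{v-k-d/2}\]
to $f(c+1)/f(c)$, while the parts depending on $y$ contribute
\[\frac{\gaussmnum{v-k}{y(c+1)}{q}\gaussmnum{d/2}{y(c)}{q}}{\gaussmnum{v-k}{y(c)}{q}\gaussmnum{d/2}{y(c+1)}{q}}.\]

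For the $c$-part I would invoke the paper's inequality $[x]_q/[y]_q\ge q^{x-y}$ (valid since the assumption $c\le k-d/2-1$ gives $k-d/2-c\ge 1$) to get $[k-c]_q/[k-d/2-c]_q\ge q^{d/2}$, so this part is at least $q^{v-k}\ge 1$.

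In Case 1, where $y(c+1)=y(c)$, the $y$-part equals $1$ and the bound $f(c+1)/f(c)\ge q^{v-k}\ge 1$ follows directly.

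In Case 2, where $y(c+1)=y(c)-1\ge 0$, I would use the standard identities $\gaussmnum{n}{y-1}{q}/\gaussmnum{n}{y}{q}=[y]_q/[n-y+1]_q$ to see that the $[y(c)]_q$ factors cancel and the $y$-part collapses to $[d/2-y(c)+1]_q/[v-k-y(c)+1]_q$. Combining this with the $q^{v-k}$ lower bound from the $c$-part, the required inequality becomes
\[q^{v-k}\cdot\frac{[d/2-y(c)+1]_q}{[v-k-y(c)+1]_q}\ge 1,\]
equivalently $q^{v-k-y(c)+1}(q^{d/2}-1)\ge q^{v-k}-1$. Using $y(c)\le d/2$ the left-hand side is at least $q^{v-k-d/2+1}(q^{d/2}-1)=q^{v-k+1}-q^{v-k-d/2+1}$, and the remaining estimate $q^{v-k}(q-1)\ge q^{v-k-d/2+1}-1$ is immediate from $d/2\ge 2$ and $q\ge 2$.

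The main obstacle is the bookkeeping in Case 2: one has to arrange the telescoping of the two $q$-binomial ratios so that the awkward $[y(c)]_q$ factors cancel before any estimation, since applying the standard $q$-integer bounds prematurely would lose the needed tightness. Once the ratio is in the clean form above, both cases reduce to elementary comparisons of powers of $q$.
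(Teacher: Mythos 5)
Your proof is correct and takes essentially the same route as the paper's: both reduce the comparison of $f(c)$ and $f(c+1)$ to the factor $\frac{[k-c]_q}{[k-d/2-c]_q}\,q^{v-k-d/2}$ coming from the $c$-dependent binomials, times the telescoped $y$-part $\frac{[d/2-y(c)+1]_q}{[v-k-y(c)+1]_q}$ (the reciprocal of the paper's $\lambda$), with the same cancellation of the $[y(c)]_q$ factors. The only difference is in the final estimate of Case~2, where the paper invokes $\lambda \le \mu(q)\,q^{v-k-d/2}$ and $\mu(q)\le q^2\le q^{d/2}$ while you compare powers of $q$ directly; both are valid elementary computations.
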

\begin{proof}
The term
\[
\lambda=\frac{[d/2-y(c)]_q!}{[d/2-y(c+1)]_q!} \cdot \frac{[v-k-y(c+1)]_q!}{[v-k-y(c)]_q!}
\]
is $1$ if $y(c+1)=y(c)$ and
\[\frac{[v-k-y(c)+1]_q}{[d/2-y(c)+1]_q} \le \mu(q) q^{v-k-d/2}\]
if $y(c+1)=y(c)-1$.
Using the $q$-factorial version of the $q$-binomial coefficient one gets:
\begin{align*}
\frac{f(c)}{f(c+1)}
&=
\frac{q^{k-d/2-c}-1}{q^{k-c}-1} \cdot q^{-(v-k-d/2)} \cdot \lambda
\\
&
\le
q^{-d/2} \cdot q^{-(v-k-d/2)} \cdot \lambda
\\
&
\le
\begin{cases}
q^{-(v-k)} &\text{if } y(c+1)=y(c) \\
\mu(q) q^{-(d/2)} \le q^{2-(d/2)} &\text{else} \\
\end{cases}
\\
&\le
1
\end{align*}

\end{proof}

The second summand in Proposition~\ref{prop:2} is already larger than the second summand in Proposition~\ref{prop:1}.

\begin{lemma}\label{lem:compare_prop1_prop2}
Let $d$, $k$, $q$, and $v$ be integers such that $q$ is a prime power,
$2 \le d/2 < k < d \le 2v/3$,
$k+d/2 \le v$,
$c=1$, and
$y=k-d/2$.
Then we have
\[
A_q(v-k,2(d-k);d/2)
\le
\frac{\gaussmnum{v-k}{y}{q}\gaussmnum{k}{c}{q}}{\gaussmnum{k-d/2}{c}{q}\gaussmnum{d/2}{y}{q}} q^{c(v-k-d/2)}
\text{.}
\]
\end{lemma}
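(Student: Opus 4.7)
The plan is to upper bound the left-hand side via the Singleton bound (Lemma~\ref{lem:singleton_bound}) and lower bound the right-hand side using the product form of $q$-binomial coefficients; these two estimates will differ by exactly a factor of $q^{d/2}$ up to the multiplicative constant $\mu(q)$, so the inequality reduces to $\mu(q)\le q^{d/2}$, which holds by Table~\ref{tab:mu}.

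For the left-hand side, Lemma~\ref{lem:singleton_bound} with parameters $(v-k,2(d-k);d/2)$ gives $A_q(v-k,2(d-k);d/2)\le\gaussmnum{v-d+1}{\max\{d/2,v-k-d/2\}}{q}\le\mu(q)q^{E}$. The exponent $E$ depends on which of the two lower indices realizes the maximum: in the case $d\le v-k$, $q$-binomial symmetry turns the bound into $\gaussmnum{v-d+1}{k-d/2+1}{q}$ with $E=(k-d/2+1)(v-k-d/2)$; in the case $d>v-k$ one has $E=(d/2)(v-3d/2+1)$.

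For the right-hand side, I would factor it as
\[\frac{\gaussmnum{v-k}{k-d/2}{q}}{\gaussmnum{d/2}{k-d/2}{q}}\cdot\frac{[k]_q}{[k-d/2]_q}\cdot q^{v-k-d/2}\]
and apply the product identity $\gaussmnum{a}{b}{q}=\prod_{i=0}^{b-1}[a-i]_q/[b-i]_q$. The first ratio becomes $\prod_{i=0}^{k-d/2-1}[v-k-i]_q/[d/2-i]_q$, and each factor is at least $q^{v-k-d/2}$ using $k+d/2\le v$; together with $[k]_q/[k-d/2]_q\ge q^{d/2}$ this shows that the right-hand side is at least $q^{(v-k-d/2)(k-d/2+1)+d/2}$.

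Comparing exponents settles the first case immediately. In the second case, setting $s=d/2$, $n=v-k$, and $b=s-(k-d/2)\ge 1$, an elementary calculation shows that the exponent gap equals $b(2s-n)+(n-s)\ge s=d/2$, using $2s-n\ge 1$ (this is the case assumption) and $n\ge s$ (from the hypothesis $k+d/2\le v$). Thus both cases reduce to $\mu(q)\le q^{d/2}$, which holds whenever $d/2\ge 2$ since $\mu(2)<4$ and $\mu(q)\le q$ for $q\ge3$ (Table~\ref{tab:mu}). The main obstacle is the case split forced by the Singleton bound; but the substitution $b=s-(k-d/2)$ together with $n\ge s$ reveals that the hypothesis $k+d/2\le v$ supplies exactly the slack needed for both forms to collapse to the same final condition.
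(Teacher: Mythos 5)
Your argument is correct and follows essentially the same route as the paper: the Singleton bound together with $\mu(q)\le q^{d/2}$ on the left, and the factor-by-factor lower bound $q^{(v-k-d/2)(k-d/2+1)+d/2}$ for the right-hand side. The only difference is that the paper invokes the Singleton bound directly in the form $A_q(v-k,2(d-k);d/2)\le\gaussmnum{v-d+1}{v-k-d/2}{q}$ (Lemma~\ref{lem:singleton_bound} permits either lower index, the maximum merely giving the smaller of the two values), so your second case, and with it the verification that the exponent gap $b(2s-n)+(n-s)$ is still at least $d/2$, is correct but unnecessary.
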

\begin{proof}
Applying the Singleton bound yields
\begin{align*}
&
A_q(v-k,2(d-k);d/2)
\le
\gaussmnum{v-d+1}{v-k-d/2}{q}
\le
\mu(q) q^{(v-k-d/2)(k-d/2+1)}
\le
q^{(v-k-d/2)(k-d/2+1)+d/2}
\\
&
=
q^{(v-k-d/2)(k-d/2)} \cdot q^{d/2} \cdot q^{v-k-d/2}
\le
\prod_{i=1}^{y} \frac{[v-k-y+i]_q}{[d/2-y+i]_q} \cdot \frac{[k]_q}{[k-d/2]_q} \cdot q^{v-k-d/2}
\\
&
=
\frac{[v-k]_q![k]_q![k-d/2-c]_q![d/2-y]_q!}{[v-k-y]_q![k-c]_q![k-d/2]_q![d/2]_q!} \cdot q^{v-k-d/2}
=
\frac{\gaussmnum{v-k}{y}{q}\gaussmnum{k}{c}{q}}{\gaussmnum{k-d/2}{c}{q}\gaussmnum{d/2}{y}{q}} q^{c(v-k-d/2)}
\end{align*}

\end{proof}

\section{Proof of Proposition~\ref{prop:0} and final remarks}\label{sec:proof_prop0}

Here we state the proof of Proposition~\ref{prop:0}, which can be found in the introduction.

First, we discuss the optimal choice of $y$ and $c$.
Remark~\ref{rem:optimal_choice_y} shows that the optimal choice for $y$ is $\max\{1,k-d/2+1-c\}$.
Then for $\max\{1,k-d+1\} \le c \le \min\{k-d/2,d/2\}$ we compare the second summand and the third summand of the statement in Proposition~\ref{prop:2} separately.
The third summand, i.e., $A_q(v-k,d-2(c-1);k-c+1)$ is monotonically decreasing in $c$ as seen in Lemma~\ref{lem:third_summand}.
The second summand, i.e.,
\[\frac{\gaussmnum{v-k}{y(c)}{q}\gaussmnum{k}{c}{q}}{\gaussmnum{k-d/2}{c}{q}\gaussmnum{d/2}{y(c)}{q}} q^{c(v-k-d/2)}\]
is also monotonically decreasing in $c$ by Lemma~\ref{lem:second_summand}.
Hence the smallest $c$ yields the smallest upper bound and therefore is $\max\{1,k-d+1\}$ the optimal choice for $c$.

Second, we compare the bound in Proposition~\ref{prop:2} to the bound in Proposition~\ref{prop:1} where both bounds are applicable, i.e., $d/2 < k < d$.
The second summand of Proposition~\ref{prop:2}, utilizing the optimal choice of $y$ and $c$, is already larger than the second summand of Proposition~\ref{prop:1} by Lemma~\ref{lem:compare_prop1_prop2}.

Hence we only apply Proposition~\ref{prop:2} for $d \le k < 3d/2$ and in particular $d \le k$ shows $c=k-d+1 \ge 1$ and $y=d/2 \ge 2$.

The restriction $v<3d/2$ is equivalent to $2(v-k-d/2) < 2(d-k)$, i.e., any two codewords $U\ne W$ in an orthogonal $(v-k,\#C,2(d-k);d/2)_q$ code have $d_S(U,W) \le 2(v-k-d/2) < 2(d-k)$, hence $\#C \le 1$.
Moreover a code attaining this bound can be constructed by extending an $(v,\#M,d;k)_q$ LMRD with the codeword $Z=\tau^{-1}(0_{v-k} \mid I_k)$ since $2k \le v$ implies that $Z$ intersects each other codeword trivially.

In addition to the trivial cases in the last proof, the second summand in Proposition~\ref{prop:1} is known in further cases:

\begin{remark}\label{rem:partial_spreads_and_spreads}
For $2 \le d/2 \le k \le v/2$ as well as $k<d \le 2v/3$:

If $d=2k$, then $A_q(v-k,2(d-k);d/2)$ corresponds to a partial spread and if in addition $r \equiv v \mod{k}$, $0 \le r <k$, and $[r]_q<k$ then $A_q(v-k,2(d-k);d/2)=\frac{q^{v-k}-q^{k+r}}{q^{k}-1}+1$~\cite{MR3682737}.
Hence the bound in Proposition~\ref{prop:0} is $\#C \le \sum_{i=0}^{l-1} q^{k(l-i)+r} +1 = A_q(v,d;k)$, using $v-k-r = lk$.
An optimal CDC containing an LMRD can be constructed with the Echelon-Ferrers construction and the pivots $p_i=(0_{ik} 1_{k} 0_{v-(i+1)k})$ for $i=0,\ldots, l$.

If $v=3d/2$, then $A_q(v-k,2(d-k);d/2)$ corresponds to an orthogonal partial spread and if in addition $d-k \mid d/2$, it corresponds to a spread of size $(q^{3d/2-k}-1)/(q^{d-k}-1)$.
\end{remark}

\begin{lemma}
There is, for integral $l \ge 1$ and prime power $q$, a $(6l,q^{3l(l+1)}+q^{2l}+q^l+1,4l;3l)_q$ CDC $C$ that contains an LMRD. This cardinality achieves the bound of Proposition~\ref{prop:0}.
\end{lemma}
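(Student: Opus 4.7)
The plan is to first confirm that the bound of Proposition~\ref{prop:0} evaluates to the claimed cardinality, then exhibit a construction that meets it. With $(v,d,k)=(6l,4l,3l)$ the inequalities $2\le d/2\le k\le v/2$ and $k<d\le 2v/3$ hold (the latter with equality), so Proposition~\ref{prop:0} applies in its $k<d\le 2v/3$ case and gives
\[\#C\le q^{3l(l+1)}+A_q(3l,2l;2l).\]
Using the duality $A_q(3l,2l;2l)=A_q(3l,2l;l)$ from the introduction together with Lemma~\ref{lem_size_ps} (taking $r=0$, which is valid since $l\mid 3l$ and $[0]_q=0<l$), the second summand evaluates to $(q^{3l}-1)/(q^l-1)=q^{2l}+q^l+1$, confirming the claimed target.

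For the construction, decompose $V=U\oplus\Gamma$ with $U=\tau^{-1}(I_{3l}\mid 0_{3l\times 3l})$. Take an LMRD code $M$ of cardinality $q^{3l(l+1)}$ from any $3l\times 3l$ MRD code $\mathcal{A}$ of minimum rank distance $2l$ and write its codewords as $L_A=\{(x,xA):x\in\F_q^{3l}\}$. Fix an $l$-spread $\{S_1,\dots,S_s\}$ of $U$ with $s=q^{2l}+q^l+1$ (it exists by Lemma~\ref{lem_size_ps}) and a family $\{T_1,\dots,T_s\}$ of $2l$-subspaces of $\Gamma$ with $\dim(T_i\cap T_j)=l$ for $i\ne j$, for instance the orthogonal complements within $\Gamma$ of an $l$-spread of $\Gamma$. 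Set $U_i=S_i\oplus T_i$ (the sum is direct since $S_i\cap T_i\subseteq U\cap\Gamma=\{0\}$) and define $C=M\cup\{U_1,\dots,U_s\}$.

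The verification that $C$ has minimum distance $4l$ splits into three cases. Distances within $M$ are at least $4l$ by the MRD property. For $i\ne j$, any vector $v\in U_i\cap U_j$ decomposes uniquely in $U\oplus\Gamma$ as $v=u+w$ with $u\in S_i\cap S_j=\{0\}$ (spread disjointness) and $w\in T_i\cap T_j$, so $\dim(U_i\cap U_j)=\dim(T_i\cap T_j)=l$ and $d_S(U_i,U_j)=4l$. For a mixed pair $L_A$ and $U_i$, the projection $\pi_U\colon V\to U$ along $\Gamma$, restricted to $L_A\cap U_i$, has kernel $L_A\cap\Gamma=\{0\}$ (so it is injective) and image contained in $S_i$; hence $\dim(L_A\cap U_i)\le\dim S_i=l$ and $d_S(L_A,U_i)\ge 4l$. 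Counting gives $\#C=q^{3l(l+1)}+(q^{2l}+q^l+1)$, which meets the bound.

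The main anticipated obstacle is ensuring compatibility between the LMRD and the added codewords $U_i$; the pleasant point of this construction is that the direct-sum pairing $U_i=S_i\oplus T_i$ makes this compatibility automatic by the one-line projection argument above, so no delicate matching between $A\in\mathcal{A}$ and the chosen $S_i,T_i$ is required.
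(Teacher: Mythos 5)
Your proof is correct, but it takes a genuinely different route from the paper's. The paper builds the same code with the Echelon--Ferrers construction from the four pivot vectors $(1_l1_l1_l0_l0_l0_l)$, $(1_l0_l0_l1_l1_l0_l)$, $(0_l1_l0_l1_l0_l1_l)$, $(0_l0_l1_l0_l1_l1_l)$, whose pairwise Hamming distance $4l$ gives the subspace distance via Lemma~\ref{lem:dh_ds}, and realizes the subcodes of sizes $q^{2l}$, $q^{l}$, $1$ via Lemma~\ref{lem:EFMRD_special_case}; the bound itself is evaluated through Remark~\ref{rem:partial_spreads_and_spreads} (here $v=3d/2$ and $d-k=l$ divides $d/2=2l$, so the second summand is an orthogonal spread of size $(q^{3l}-1)/(q^l-1)$), which agrees with your duality computation. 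Your construction instead pairs an $l$-spread $\{S_i\}$ of a complement of $\Gamma$ with the dual spread $\{T_i\}$ inside $\Gamma$ and adjoins $U_i=S_i\oplus T_i$; all three distance checks are sound (in particular $U_i\cap U_j=T_i\cap T_j$ of dimension exactly $l$, and the projection argument giving $\dim(L_A\cap U_i)\le l$). What your version buys is that compatibility with an \emph{arbitrary} LMRD is manifest --- no property of the matrices $A$ is used --- and it makes the structure predicted by Lemma~\ref{lem:cdc_partition} and Section~\ref{sec:code_improvements} explicit: the $T_i$ form a $(3l,q^{2l}+q^{l}+1,2l;2l)_q$ code in $\Gamma$ attaining the second summand, and the $S_i$ extend it into $\Delta$. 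The paper's version buys uniformity with the companion $(6+3l,4+2l,3+l)_q$ family and reuses its general machinery. Two cosmetic points: the existence of the $l$-spread follows from Segre's theorem ($k\mid v$), cited in Section~\ref{sec:preliminaries}, rather than from Lemma~\ref{lem_size_ps}, which only supplies the cardinality; and you only need $\dim(T_i\cap T_j)\le l$, which your dual-spread choice attains with equality since any two distinct $2l$-subspaces of $\F_q^{3l}$ already meet in dimension at least $l$.
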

\begin{proof}
The bound of Proposition~\ref{prop:0} can be computed via Remark~\ref{rem:partial_spreads_and_spreads}.

$C$ is constructed with the Echelon-Ferrers construction and these pivot vectors:

$( 1_{l} 1_{l} 1_{l} 0_{l} 0_{l} 0_{l} )$ (i.e., an LMRD of size $q^{3l(l+1)}$)

$( 1_{l} 0_{l} 0_{l} 1_{l} 1_{l} 0_{l} )$

$( 0_{l} 1_{l} 0_{l} 1_{l} 0_{l} 1_{l} )$

$( 0_{l} 0_{l} 1_{l} 0_{l} 1_{l} 1_{l} )$ (i.e., a subcode with $1$ element )

Note that the Hamming distances between these four constant weight codewords is always $4l$ which implies the subspace distance of at least $4l$ by Lemma~\ref{lem:dh_ds}.
The size of the subcode, corresponding to the second constant weight codeword, is $q^{2l}$ and can be constructed with Lemma~\ref{lem:EFMRD_special_case} and two $[l \times 2l, q^{2l}, l]_q$ MRDs.
The third constant weight codeword gives rise to $q^{l}$ codewords of $C$ using the same technique and two $[l \times l, q^{l}, l]_q$ MRDs.
\end{proof}

Previously, only the optimality for $l=1$ was known \cite[Theorem~10]{MR3015712}.

Another series of LMRD bound achieving parameters is:

\begin{lemma}
There is, for integral $l \ge 1$ and prime power $q$, a $(6+3l,q^{6+4l}+q^{2+l}+1,4+2l;3+l)_q$ CDC $C$ that contains an LMRD. This cardinality achieves the bound of Proposition~\ref{prop:0}.
\end{lemma}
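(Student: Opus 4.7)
The plan is to construct $C$ as the union of the standard LMRD code $M$ of size $q^{(v-k)(k-d/2+1)}=q^{6+4l}$ together with $q^{2+l}+1$ additional codewords, each meeting $\Gamma$ in a $(2+l)$-dimensional subspace. Observe first that for the current parameters we have $k<d=2v/3$, so the applicable bound in Proposition~\ref{prop:0} is $q^{(v-k)(k-d/2+1)}+A_q(v-k,2(d-k);d/2)$. Using duality one has $A_q(3+2l,2+2l;2+l)=A_q(3+2l,2+2l;1+l)$, which is a partial $(1+l)$-spread size in $\F_q^{3+2l}$; since $3+2l=2(1+l)+1$ gives residue $r=1$ with $[1]_q=1<1+l$ for $l\ge 1$, Lemma~\ref{lem_size_ps} evaluates this to $q^{2+l}+1$, matching the claimed cardinality.

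The construction has three steps. First, fix a $(3+2l,q^{2+l}+1,2+2l;2+l)_q$ CDC $\{T_1,\dots,T_{q^{2+l}+1}\}\subseteq\gaussmset{\Gamma}{2+l}$ whose existence is guaranteed by the previous paragraph; it satisfies $\dim(T_i\cap T_j)\le 1$ for $i\ne j$. Second, pick vectors $x_1,\dots,x_{q^{2+l}+1}\in V\setminus\Gamma$ whose images in the quotient $V/\Gamma\cong\F_q^{3+l}$ span pairwise distinct $1$-dimensional subspaces; this is feasible since $\gaussmnum{3+l}{1}{q}\ge q^{2+l}+1$. Third, set $U_i=T_i+\langle x_i\rangle$, which is $(3+l)$-dimensional because $x_i\notin\Gamma\supseteq T_i$, and put $C=M\cup\{U_1,\dots,U_{q^{2+l}+1}\}$.

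The remaining task is to check that $\dim(Y\cap Z)\le k-d/2=1$ for every pair of distinct $Y,Z\in C$. The case $Y,Z\in M$ is the defining LMRD property. For $Y=U_i$ and $Z\in M$, one uses $Z\oplus\Gamma=V$: any $w\in U_i\cap Z$ has the form $t+\alpha x_i$ with $t\in T_i$ and $\alpha\in\F_q$, and either $\alpha=0$ (forcing $t\in\Gamma\cap Z=\{0\}$) or $\alpha\ne 0$, in which case $t+\alpha x_i\in Z$ pins down $w=\alpha\tilde x_i$ for the unique $\tilde x_i\in Z$ with the same image as $x_i$ in $V/\Gamma$, leaving at most a $1$-dimensional intersection. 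For $Y=U_i$, $Z=U_j$ with $i\ne j$, a vector in $U_i\cap U_j$ satisfies $t_i+\alpha x_i=t_j+\beta x_j$; projecting to $V/\Gamma$ gives $\alpha\bar x_i=\beta\bar x_j$, and the linear independence of $\bar x_i,\bar x_j$ in the quotient forces $\alpha=\beta=0$, so $U_i\cap U_j=T_i\cap T_j$ has dimension at most $1$.

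The heart of the argument is the coordination between the two choices: the CDC in $\Gamma$ keeps intersections small there, while the distinct lines in $V/\Gamma$ prevent the lifts $\langle x_i\rangle$ from contributing any extra dimension to $U_i\cap U_j$. I expect the distance analysis against the LMRD codewords to be the most delicate step, since it simultaneously uses $M\cap\Gamma=\{0\}$, the bijectivity of the projection $M\to V/\Gamma$, and the specific form of $U_i$; all remaining $q$-binomial inequalities, including $\gaussmnum{3+l}{1}{q}\ge q^{2+l}+1$, are straightforward.
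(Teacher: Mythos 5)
Your proof is correct, but it takes a genuinely different route from the paper. The paper evaluates the bound the same way you do (duality plus the partial-spread formula gives $A_q(3+2l,2+2l;2+l)=q^{2+l}+1$), but its construction is via the Echelon--Ferrers method: three binary pivot vectors of pairwise Hamming distance $4+2l$, with the size-$q^{2+l}$ middle subcode assembled from a $[1\times(2+l),q^{2+l},1]_q$ and a $[(2+l)\times(1+l),q^{2+l},1+l]_q$ MRD code glued by Lemma~\ref{lem:EFMRD_special_case}, plus one further codeword; distances are then certified by Lemma~\ref{lem:dh_ds}. You instead take a maximum $(3+2l,q^{2+l}+1,2+2l;2+l)_q$ CDC inside $\Gamma$ (the dual of a maximum partial $(1+l)$-spread) and lift each member by a single vector, the lifts having pairwise independent images in $V/\Gamma$; you then verify $\dim(Y\cap Z)\le 1$ by direct linear algebra, using $Z\oplus\Gamma=V$ for $Z\in M$. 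All three intersection checks are sound (the case $U_i$ versus $Z\in M$ correctly exploits that $\pi|_Z:Z\to V/\Gamma$ is an isomorphism), and the counting conditions $\gaussmnum{3+l}{1}{q}\ge q^{2+l}+1$ and $\dim(T_i\cap T_j)\le 1$ hold. Your construction is essentially the generic ``extend an LMRD by codewords with large intersection with $\Gamma$'' strategy of Section~\ref{sec:code_improvements} specialized to these parameters; it treats all $q^{2+l}+1$ extra codewords uniformly and avoids the Ferrers-diagram machinery, whereas the paper's version is fully explicit in terms of MRD building blocks. One stylistic caveat: you rely on the \emph{existence} half of Lemma~\ref{lem_size_ps} (i.e., that a partial spread of size $q^{2+l}+1$ is actually constructible, which goes back to Beutelspacher), so it is worth citing that explicitly rather than only the upper-bound statement.
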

\begin{proof}
First, the bound is given by $\#C \le q^{6+4l}+A_q(3+2l,2+2l;2+l)$. The second summand is, due to orthogonal codes and $3+2l \equiv 1 \mod{(1+l)}$ for $l \ge 1$, known \cite{MR0404010} and equal to $q^{2+l}+1$.

Second, $C$ can be constructed with the Echelon-Ferrers construction and these pivot vectors:

$(1_1 1_{1+l} 1_1 0_{1+l} 0_1 0_{1+l})$ (i.e., an LMRD of size $q^{6+4l}$)

$(1_1 0_{1+l} 0_1 1_{1+l} 1_1 0_{1+l})$

$(0_1 0_{1+l} 1_1 0_{1+l} 1_1 1_{1+l})$ (i.e., a subcode with $1$ element )

Note that the Hamming distances between these three constant weight codewords is always $4+2l$ which implies the subspace distance of at least $4+2l$ by Lemma~\ref{lem:dh_ds}.
The size of the subcode, corresponding to the second constant weight codeword, is $q^{2+l}$ and can be constructed with Lemma~\ref{lem:EFMRD_special_case}, a $[1 \times (2+l), q^{2+l}, 1]_q$ MRD and a $[(2+l) \times (1+l), q^{2+l}, 1+l]_q$ MRD.
\end{proof}

For all prime powers $q$ and integral $l \ge 1$, this bound was previously known \cite[Theorem~10]{MR3015712} and is listed here for completeness.

\section{Improved code sizes} \label{sec:code_improvements}

Since Lemma~\ref{lem:cdc_partition} states that any $(v,\#C,d;k)_q$ CDC that contains an LMRD $M$ can be partitioned into $C= M \dot\cup S_{d/2} \dot\cup \ldots \dot\cup S_{k}$, we know that any codeword in $C \setminus M$ has an at least $d/2$-dimensional intersection with $\Gamma$.
Hence we describe a promising approach to find large codes $C$ by considering $E \subseteq \gaussmset{\Gamma}{d/2}$.
If $k<d$, i.e., $k-d/2+1 \le d/2$, then any codeword in $C \setminus M$ contains different elements in $E$.
Moreover, Lemma~\ref{lem:cdc_partition} also states, that the minimum distance of $E$ has to be at least $2(d-k)$, cf. Proposition~\ref{prop:1}, with other words, $E$ is a $(v-k,\#E,2(d-k);d/2)_q$ CDC.
Therefore it is natural to consider already large CDCs, which are for example listed in~\cite{HKKW2016Tables} and try to extend them.
On the other hand, a given $(v',N',d';k')_q$ CDC, where $2 \le d'$, can be used to build a $(v'+2k'-d'/2,N,2k';2k'-d'/2)_q$ CDC, $N \le N'$, that is compatible to any LMRD that respects these parameters.

Moreover, if $k<d$, then a $(v,\#C,d;k)_q$ CDC $C$ that contains an LMRD $M$ implies a $(v-k,\#C-\#M,2(d-k);d/2)_q$ CDC $C'=\{\mathcal{H}_{d/2}(U \cap \Gamma) \mid U \in C \setminus M \}$, which in turn shows that generating a large $C$ is at least as difficult as generating $C'$.

Next, the number of subspaces in $C\setminus M$ having a large intersection with $\Gamma$ is limited by $\# S_t \le A_q(v-k,d-2(k-t);t)$ for $\max\{d/2,k-d/2+1\} \le t \le k$ as an application of Lemma~\ref{lem:subspaces_upper_bound}, $m=M=t$, $l=2k-d$, $A_t=\{U \cap \Gamma \mid U \in S_t\} \subseteq \gaussmset{\Gamma}{t}$, with $\#A_t=\#S_t$, due to the minimum distance $d_S(U \cap \Gamma, W \cap \Gamma) \ge d_S(U, W) -2k+2t \ge d-2k+2t >0$, shows.

\begin{algorithm}
\caption{Random search strategy for extending an arbitrary LMRD}
\label{alg:search}
\begin{algorithmic}[1]
\Require{$E$ is a $(v-k,\#E,2(d-k);d/2)_q$ CDC embedded in $\Gamma$, $1 \le n_{\max}$, and $1 \le r_{\max}$ integers}
\Procedure{Search}{$E,n_{\max},r_{\max}$}
\State $T \gets \tau\left(\gaussmset{\mathbb{F}_q^{v-d/2}}{k-d/2}\right)$
\State $C_{\max} \gets \{\}$
\For{$n \in \{1, \ldots, n_{\max}\}$}
\State $C \gets \{\}$
\For{$U \in E$}
\State $A \in \gaussmset{V}{v-d/2}$
\Comment{such that $A \oplus U = V$}
\State $M \gets \tau(A)$
\State $\sigma \gets \operatorname{random}(\mathcal{S}_{\#T})$
\For{$r \in \{1, \ldots, \min\{r_{\max},\#T\}\}$}
\State $W \gets U \oplus \tau^{-1}(T_{\sigma(r)} \cdot M)$
\For{$Z \in C$}
\If{$\dim(Z \cap W) > k-d/2$}
\State \textbf{continue} $r$
\EndIf
\EndFor
\State $C \gets C \cup W$
\If{$k<d$}
\State \textbf{continue} $U$
\EndIf
\EndFor
\EndFor
\If{$\#C > \#C_{\max}$}
\State $C_{\max} \gets C$
\EndIf
\EndFor
\State \textbf{return} $C_{\max}$
\EndProcedure
\end{algorithmic}
\end{algorithm}

For a given subcode $E$, Algorithm~\ref{alg:search} shows our applied search strategy.
Note, that the argument $r_{\max}$ controls the level of detail of each of the independent $n_{\max}$ runs.
Note further, that we do not precompute the set of extensions for each subspace in $E$ although it may be useful to save computation time if $r_{\max}$ is large compared to the size of the set of extensions, i.e. $\gaussmnum{v-d/2}{k-d/2}{q}$, and $n_{\max}$ is at least two.

Table~\ref{tab:new_code_sizes} lists improved sizes of CDCs for small fixed parameters $q$, $v$, $d$, and $k$. The size of the LMRD with this parameters is $\#M$ and the successive columns show only the extended cardinality to the corresponding LMRD size. Therefore LMRD-B is the size of the LMRD bound, PKLB is the previously best known lower bound, $E$ is the used subcode up to embedding in $\Gamma$, and BKLB is the current best known lower bound.
The codes can be downloaded from \url{http://subspacecodes.uni-bayreuth.de}, see also~\cite{HKKW2016Tables}.

\begin{table}
\setlength{\tabcolsep}{4pt}

\caption{New lower bounds on some CDC parameters}\label{tab:new_code_sizes}
\centering
\begin{tabular}{llll|lllll}
$q$ & $v$ & $d$ & $k$ & $\#M$ & \twolines{LMRD-B}{$-\#M$} & \twolines{PBKLB}{$-\#M$} & $E$ & \twolines{BKLB}{$-\#M$} \\

\hline

$2$ & $10$ & $6$ & $5$ & $2^{15}$ & $155$ & \twolines{$122$}{\cite[Ex.~4]{MR2801585}} & $\gaussmset{\Gamma}{3}$ & $155$ \\

$2$ & $11$ & $6$ & $4$ & $2^{14}$ & \twolines{$A_2(7,4;3)$}{$\le 381$} & \twolines{$285$}{\cite{MR2589964,HKKW2016Tables}} & \twolines{$(7,333,4;3)_2$}{\cite{HKKW2016Tables}} & $333$ \\

$2$ & $11$ & $6$ & $5$ & $2^{18}$ & $1395$ & \twolines{$852$}{\cite{MR2589964,HKKW2016Tables}} & $\gaussmset{\Gamma}{3}$ & $1334$ \\

$2$ & $12$ & $6$ & $4$ & $2^{16}$ & \twolines{$A_2(8,4;3)$}{$\le 1493$} & \twolines{$1144$}{\cite{MR2589964,HKKW2016Tables}} & \twolines{$(8,1326,4;3)_2$}{\cite{new_lower_bounds_cdc}} & $1303$ \\

$2$ & $12$ & $6$ & $5$ & $2^{21}$ & $11811$ & \twolines{$7232$}{\cite{MR2589964,HKKW2016Tables}} & $\gaussmset{\Gamma}{3}$ & $7925$ \\

$2$ & $13$ & $6$ & $4$ & $2^{18}$ & \twolines{$A_2(9,4;3)$}{$\le 6205$} & \twolines{$4747$}{\cite{MR3367813}} & \twolines{$(9,5986,4;3)_2$}{\cite{new_lower_bounds_cdc}} & $5753$ \\
\end{tabular}
\end{table}

Note that a further improvement of the second code, i.e. $(q,v,d,k)=(2,11,6,4)$, would imply a $(7,\#E,4;3)_2$ CDC $E$ with $333 < \#E$.

The situation of the first code, i.e., $(q,v,d,k)=(2,10,6,5)$, is a special case, since $\#S_3 \le 155$, $\#S_4 \le 1$, and $S_5 \subseteq \{\Gamma\}$.

If $\#S_5=1$, then $\#S_3=\#S_4=0$ because any subspace $U \in S_3 \cup S_4$ has $d_S(U,\Gamma) \le 4$, hence we set $S_5 = \emptyset$.

If $\#S_4=1$, then $\#S_3 \le 140$ because for $U \in S_4$ we have $\#\{ W \in S_3 \mid \dim((U \cap \Gamma) \cap (W \cap \Gamma)) = 3 \} = \gaussmnum{4}{3}{2}=15$, i.e., the elements in this set have $d_S(U,W)=2(5-3)=4$ and, aiming for large code sizes, we set $S_4 = \emptyset$.

Therefore, a code with these parameters that contain an LMRD and achieves the LMRD bound has to contain a subcode $S_3$  of cardinality $155$, i.e., all subspaces $\gaussmset{\Gamma}{3}$ have to be extended with subspaces in $\gaussmsetminusset{V}{\Gamma}{2}$ such that the minimum distance constraint is fulfilled.
Note that the subspace distance of any codeword $U \in M$ and $W \in S_3 = C \setminus M$ is at least $6$ and therefore solely the minimum distance of $S_3$ is in question.

There are, for each subspace in $\gaussmset{\Gamma}{3}$, $\gaussmnum{10-3}{5-3}{2}=2667$ extensions to $5$ dimensions, of which $2480$ intersects $\Gamma$ $3$-dimensional.

Hence by prescribing the following subgroup of order $31$ of the stabilizer of $\Gamma$, i.e., the cyclic group generated by a block diagonal matrix consisting of twice the same generator of a Singer cycle in $\Gamma$,
\[
G=
\left\langle\left(
\begin{smallmatrix}
0&0&0&0&1\\
1&0&0&0&0\\
0&1&0&0&1\\
0&0&1&0&0\\
0&0&0&1&0\\
&&&&& 0&0&0&0&1\\
&&&&& 1&0&0&0&0\\
&&&&& 0&1&0&0&1\\
&&&&& 0&0&1&0&0\\
&&&&& 0&0&0&1&0\\
\end{smallmatrix}
\right)\right\rangle,
\]
we partition the set $\{U \in \gaussmset{\F_2^{10}}{5} \mid \dim(U \cap \Gamma)=3\}$ of size $2480 \cdot 155 = 384400$ into $12400$ orbits of length $31$ under the action of $G$. $3100$ of these orbits contain a pair of subspaces that has an intersection of at least dimension $3$ and hence these orbits cannot be subset of a $(10,N,6;5)_2$ CDC. The remaining $9300$ orbits are then considered as vertices of a graph in which two vertices $O_1 \ne O_2$ share an edge iff $\dim(U \cap W) \le 2$ for all $U \in O_1$ and $W \in O_2$. Clearly the clique number is upper bounded by $5=\#\gaussmset{\Gamma}{3}/\#G$ since each $3$-dimensional subspaces in $\Gamma$ may be contained at most once without violating the minimum distance. A greedy clique search provides a clique of size $5$, with other words these five orbits are an extension of any $(10,2^{15},6;5)_2$ LMRD of size $155$ achieving the LMRD bound of Proposition~\ref{prop:0}. Representatives in RREF of these five orbits are
\begin{align*}
&
\left(\begin{smallmatrix}
1&0&0&0&0&0&0&0&0&0\\
0&0&1&1&1&0&0&0&1&0\\
&&&&& 1&0&0&0&0\\
&&&&& 0&1&1&0&0\\
&&&&& 0&0&0&0&1\\
\end{smallmatrix}\right),
\left(\begin{smallmatrix}
1&0&0&0&0&0&0&0&0&0\\
0&1&0&0&0&0&1&0&0&0\\
&&&&& 1&0&0&0&0\\
&&&&& 0&0&1&0&0\\
&&&&& 0&0&0&1&0\\
\end{smallmatrix}\right),
\left(\begin{smallmatrix}
0&1&0&0&0&0&0&0&0&0\\
0&0&1&0&0&0&0&0&0&1\\
&&&&& 1&0&0&0&1\\
&&&&& 0&1&0&1&0\\
&&&&& 0&0&1&0&1\\
\end{smallmatrix}\right),
\\
&
\left(\begin{smallmatrix}
1&0&0&0&0&0&0&0&0&0\\
0&0&0&1&1&0&0&0&1&0\\
&&&&& 1&0&0&0&1\\
&&&&& 0&1&0&1&1\\
&&&&& 0&0&1&0&1\\
\end{smallmatrix}\right),
\left(\begin{smallmatrix}
1&0&0&0&0&0&0&0&0&0\\
0&0&0&0&1&0&0&0&1&1\\
&&&&& 1&0&0&0&0\\
&&&&& 0&1&0&0&1\\
&&&&& 0&0&1&1&1\\
\end{smallmatrix}\right),
\end{align*}
in which the omitted parts are zeros since the corresponding rows are RREF matrices of the $3$-dimensional intersection with $\Gamma$.

\section{Conclusion}\label{sec:conclusion}

In this paper we generalize the bounds for the cardinality of constant dimension codes that contain a lifted maximum rank distance code, first studied in \cite[Theorems~10 and 11]{MR3015712}, to a larger set of parameters.
Now we have bounds for the size of CDCs containing LMRDs as subset which are not applicable for general CDCs iff $k < 3d/2$.
It remains an open question if there are LMRD bounds for $3d/2 \le k$.
Furthermore the proofs of these bounds provide new insights in the structure of extensions of lifted maximum rank distance codes and give rise to six new largest CDCs.

% \bibliographystyle{abbrv}
% \bibliography{library}

\end{document}